\newtheorem{theorem}{Theorem}
\newtheorem{corollary}[theorem]{Corollary}
\newtheorem{proposition}[theorem]{Proposition}
\newtheorem{conjecture}[theorem]{Conjecture}
\newtheorem{lemma}[theorem]{Lemma}
\newtheorem{definition}[theorem]{Definition}
\newtheorem{example}[theorem]{Example}
\newcommand{\diag}{{\rm diag\,}}
\newcommand{\cval}{{\rm cval\,}}
\newcommand{\cpk}{{\rm cpk\,}}
\newcommand{\Expk}{{\rm Expk\,}}
\newcommand{\Ubdes}{{\rm Ubdes\,}}
\newcommand{\bddes}{{\rm bddes\,}}
\newcommand{\expk}{{\rm expk\,}}
\newcommand{\ubdes}{{\rm ubdes\,}}
\newcommand{\LS}{{\rm LS\,}}
\newcommand{\cdd}{{\rm cdd\,}}
\newcommand{\Ddes}{{\rm Ddes\,}}
\newcommand{\Laplat}{{\rm Laplat\,}}
\newcommand{\Orb}{{\rm Orb\,}}
\newcommand{\Des}{{\rm Des\,}}
\newcommand{\Dasc}{{\rm Dasc\,}}
\newcommand{\cda}{{\rm cda\,}}
\newcommand{\JSP}{{\rm JSP\,}}
\newcommand{\JS}{{\rm JS\,}}
\newcommand{\dasc}{{\rm dasc\,}}
\newcommand{\desp}{{\rm desp\,}}
\newcommand{\plat}{{\rm plat\,}}
\newcommand{\lap}{{\rm laplat\,}}
\newcommand{\single}{{\rm single\,}}
\newcommand{\ddes}{{\rm ddes\,}}
\newcommand{\des}{{\rm des\,}}
\newcommand{\exc}{{\rm exc\,}}
\newcommand{\aexc}{{\rm aexc\,}}
\newcommand{\we}{{\rm wexc\,}}
\newcommand{\fix}{{\rm fix\,}}
\newcommand{\dc}{{\rm dc\,}}
\newcommand{\mdn}{\mathcal{D}}
\newcommand{\msn}{\mathfrak{S}_n}
\newcommand{\ms}{\mathfrak{S}}
\newcommand{\lrf}[1]{\lfloor #1\rfloor}
\newcommand{\mq}{\mathcal{Q}}
\newcommand{\mqn}{\mathcal{Q}_n}
\newcommand{\asc}{{\rm asc\,}}
\newcommand{\Eulerian}[2]{\genfrac{<}{>}{0pt}{}{#1}{#2}}
\newcommand{\Stirling}[2]{\genfrac{\{}{\}}{0pt}{}{#1}{#2}}
\title{$\gamma$-positivity and partial $\gamma$-positivity of descent-type polynomials}
\author[S.-M.~Ma]{Shi-Mei Ma}
\address{School of Mathematics and Statistics,
        Northeastern University at Qinhuangdao,
         Hebei 066000, P.R. China}
\email{shimeimapapers@163.com (S.-M. Ma)}
\author[J.~Ma]{Jun Ma}
\address{Department of mathematics, Shanghai jiao tong university, Shanghai, P.R. China}
\email{majun904@sjtu.edu.cn(J.~Ma)}
\author[Y.-N. Yeh]{Yeong-Nan Yeh}
\address{Institute of Mathematics,
        Academia Sinica, Taipei, Taiwan}
\email{mayeh@math.sinica.edu.tw (Y.-N. Yeh)}
\subjclass[2010]{Primary 05A05; Secondary 05A15}
\begin{document}

\maketitle
\begin{abstract}
In this paper, we study $\gamma$-positivity of descent-type polynomials
by introducing the change of context-free grammars method.
We first present grammatical proofs of the $\gamma$-positivity of the
Eulerian polynomials, type $B$ Eulerian polynomials, derangement polynomials, Narayana polynomials and type $B$ Narayana polynomials. We then provide partial
$\gamma$-positive expansions for several multivariate
polynomials associated to Stirling permutations,
Legendre-Stirling permutations, Jacobi-Stirling permutations and type $B$ derangements, and the recurrences
for the partial $\gamma$-coefficients of these expansions are also obtained. Moreover, we define variants of the Foata-Strehl group action which are used to give combinatorial interpretations for the coefficients of most of these partial $\gamma$-positive expansions.
\bigskip

\noindent{\sl Keywords}: Eulerian polynomials; Derangement polynomials; Stirling permutations; Legendre-Stirling permutations; Jacobi-Stirling permutations
\end{abstract}
\date{\today}
\section{Introduction}
\hspace*{\parindent}
Let $f(x)=\sum_{i=0}^nf_ix^i$ be a symmetric polynomial, i.e., $f_i=f_{n-i}$ for any $0\leq i\leq n$. Then $f(x)$ can be expanded uniquely as
$f(x)=\sum_{k=0}^{\lrf{\frac{n}{2}}}\gamma_kx^k(1+x)^{n-2k}$, and it is said to be {\it $\gamma$-positive} if $\gamma_k\geq 0$ for $0\leq k\leq \lrf{\frac{n}{2}}$ (see~\cite{Gal05}).
The $\gamma$-positivity provides an approach to study symmetric and unimodal polynomials and has been extensively studied
(see~\cite{Athanasiadis17,Lin17,Lin15,Zhuang16} for instance).

Let $[n]=\{1,2,\ldots,n\}$.
Let $\msn$ denote the symmetric group of all permutations of $[n]$ and let $\pi=\pi(1)\pi(2)\cdots\pi(n)\in\msn$.
The number of {\it descents} of $\pi$ is defined by $$\des(\pi)=\#\{i\in[n-1]\mid \pi(i)>\pi(i+1)\}.$$
The classical {\it Eulerian number} $\Eulerian{n}{k}$ enumerates the number of permutations in $\msn$ with $k-1$ descents.
The {\it Eulerian polynomials} are defined by
$A_n(x)=\sum_{k=1}^n\Eulerian{n}{k}x^k$.
The $\gamma$-positivity of $A_n(x)$ was first studied by Foata and Sch\"utzenberger~\cite{Foata70}.
An index $i\in [n]$ is a {\it peak} (resp.~{\it double descent})
of $\pi$ if $\pi(i-1)<\pi(i)>\pi(i+1)$ (resp. $\pi(i-1)>\pi(i)>\pi(i+1)$), where $\pi(0)=\pi(n+1)=0$.
Let $a(n,k)$ be the number of permutations in $\msn$ with $k$ peaks and without double descents.
Foata and Sch\"utzenberger~\cite{Foata70} discovered that
\begin{equation}\label{Anx-gamma}
A_n(x)=\sum_{k=1}^{\lrf{({n+1})/{2}}}a(n,k)x^k(1+x)^{n+1-2k}.
\end{equation}
Moreover,
the numbers $a(n,k)$ satisfy the recurrence relation
\begin{equation*}\label{ank-recu}
a(n,k)=ka(n-1,k)+(2n-4k+4)a(n-1,k-1),
\end{equation*}
with the initial conditions $a(1,1)=1$ and $a(1,k)=0$ for $k\neq1$ (see~\cite[A101280]{Sloane}).
Subsequently, Foata and Strehl~\cite{Foata74} presented a proof of~\eqref{Anx-gamma} by
introducing a group action (which is now known as the {\it Foata-Strehl group action}) on the symmetric group $\msn$, by which
they partition $\msn$ into equivalence classes, so that for each class $\textrm{C}$,
$$\sum_{\pi\in \textrm{C}}x^{\des(\pi)}=x^i(1+x)^{n-1-2i},$$
where $i$ is the number of peaks of $\pi\in \textrm{C}$ (see~\cite{Branden08} for instance).

For an alphabet $A$, let $\mathbb{Q}((A))$ be the ring of formal Laurent series formed from letters in $A$. A {\it Chen's grammar} (also known as context-free grammar) over
$A$ is a function $G: A\rightarrow \mathbb{Q}((A))$ that replaces a letter in $A$ with an element of $\mathbb{Q}((A))$ (see~\cite{Chen93}).
The formal derivative $D$ is a linear operator defined with respect to a Chen's grammar $G$.
In other words, $D$ is the unique derivation satisfying $D(x)=G(x)$ for $x\in A$.
For example, if $A=\{x,y\}$ and $G=\{x\rightarrow xy,y\rightarrow y\}$, then $D(x)=xy,D^2(x)=D(xy)=xy^2+xy$.

Chen's grammars have been found extremely useful in deriving convolution formulas
and exponential generating functions of enumerative polynomials of
various combinatorial structures,
including set partitions~\cite{Chen93}, permutations~\cite{Chen17,Fu18,Ma131,MaYeh17,Ma1801} and increasing trees~\cite{Chen17,Dumont96}.
The purpose of the paper is to apply the grammatical method to
problems of $\gamma$-positivity.
\begin{definition}
A {\it change of grammars} is a substitution method in which the original grammars are replaced with functions of other grammars.
\end{definition}
The intent of the change of grammars method is that when expressed in new grammars, the problem may equivalent to a better understood problem.
This method can be seen in the proof of the following result.
\begin{proposition}[{\cite{Foata70}}]\label{Foata}\label{Foata}
For any $n$, the polynomial $A_n(x)$ is $\gamma$-positive.
\end{proposition}
\begin{proof}
Following~{\cite[Section~2.1]{Dumont96}},
if $A=\{x,y\}$ and
$G=\{x\rightarrow xy, y\rightarrow xy\}$,
then we have
$$D^n(x)=\sum_{k=1}^{n}\Eulerian{n}{k}x^{k}y^{n+1-k}$$ for $n\ge 1$.
Set $u=xy$ and $v=x+y$.
Note that $D(u)=uv$ and $D(v)=2u$.
It is easy to verify that
if $A=\{x,u,v\}$ and
$G=\{x\rightarrow u, u\rightarrow uv, v\rightarrow 2u\}$,
then there exist nonnegative integers $\widehat{a}(n,k)$ such that
\begin{equation}\label{Dnw02}
D^n(x)=\sum_{k=1}^{\lrf{(n+1)/2}}\widehat{a}(n,k)u^kv^{n+1-2k}.
\end{equation}
Using $D^{n+1}(x)=D(D^n(x))$, we see that numbers $\widehat{a}(n,k)$ and $a(n,k)$
satisfy the same recurrence relation and initial conditions.
Thus $\widehat{a}(n,k)=a(n,k)$.
When $y=1$, we have $u=x$ and $v=1+x$.
Then~\eqref{Dnw02} reduces to~\eqref{Anx-gamma}.
\end{proof}

We now introduce the following definition.
\begin{definition}
Let $g(x,y)$ be a bivariate polynomial of degree $n$.
If $g(x,y)$ can be expanded as
\begin{equation*}\label{gxy01}
g(x,y)=\sum_{k=0}^{\lrf{\frac{n}{2}}}\gamma_k(xy)^k(x+y)^{n-2k}
\end{equation*}
and $\gamma_k\geq 0$ for $0\leq k\leq \lrf{\frac{n}{2}}$, then we say that $g(x,y)$ is a {\it homogeneous $\gamma$-positive polynomial}.
\end{definition}
Therefore, if $g(x,y)$ is a homogeneous $\gamma$-positive polynomial, then $g(x,1)=g(1,x)$ and $g(x,1)$ is a $\gamma$-positive polynomial.

\begin{definition}
Let $p(x,y,z)$ be a three-variable polynomial. Suppose $p(x,y,z)$ can be expanded as
$p(x,y,z)=\sum_{i}s_i(x,y)z^i$.
If $s_i(x,y)$ is a homogeneous $\gamma$-positive polynomial
for every $i$, then we say that $p(x,y,z)$ is a partial $\gamma$-positive polynomial.
\end{definition}

It should be noted that partial $\gamma$-positive polynomials occur very often in combinatorics,
see~\cite{Lin15,Zeng12} for instance. In this paper, we shall introduce several partial $\gamma$-positive
polynomials.

Consider the grammar
\begin{equation}\label{type-grammar}
G=\{x\rightarrow f_1(x,y,z,\ldots), y\rightarrow f_2(x,y,z,\ldots),z\rightarrow f_3(x,y,z,\ldots),\ldots\},
\end{equation}
where $f_1(x,y,z,\ldots)=f_2(y,x,z,\ldots)$. Since $x$ and $y$ are symmetric, we say that the grammar~\eqref{type-grammar} is {\it partial symmetric}.
In this paper, the type of the change of partial symmetric grammars
is given as follows:
\begin{equation}\label{change-grammars}
\left\{
  \begin{array}{ll}
    u=xy, &  \\
    v=x+y. &
  \end{array}
\right.
\end{equation}

This paper is organized as follows. In Section~\ref{section02}, we present grammatical proofs of the $\gamma$-positivity of the
type $B$ Eulerian polynomials, derangement polynomials, Narayana polynomials and type $B$ Narayana polynomials.
From Section~\ref{section03} to Section~\ref{section06}, we provide partial
$\gamma$-positive expansions for several
polynomials associated to Stirling permutations,
Legendre-Stirling permutations, Jacobi-Stirling permutations and type $B$ derangements, and the recurrences
for the partial $\gamma$-coefficients of these expansions are also obtained.
\section{$\gamma$-positivity of several classical enumerative polynomials}\label{section02}
\subsection{Eulerian polynomials of type $B$}
\hspace*{\parindent}

Let $B_n$ be the hyperoctahedral group of rank $n$.
Elements of $B_n$ are signed permutations of $\pm[n]$ with the property that $\pi(-i)=-\pi(i)$ for all $i\in [n]$, where $\pm[n]=\{\pm1,\pm2,\ldots,\pm n\}$.
The {\it type $B$ Eulerian polynomials} are defined by
$$B_n(x)=\sum_{\pi\in B_n}x^{\des_B(\pi)},$$
where
$\des_B(\pi)=\#\{i\in\{0,1,2,\ldots,n-1\}\mid\pi(i)>\pi({i+1})\}$ and $\pi(0)=0$.
The $\gamma$-positivity of $B_n(x)$ was extensively studied by Petersen~\cite{Petersen07,Petersen15} and Chow~\cite{Chow08}.

\begin{proposition}[{\cite{Chow08,Petersen07}}]\label{Foata}\label{Petersen}
For any $n$, the polynomial $B_n(x)$ is $\gamma$-positive.
\end{proposition}
\begin{proof}
According to~\cite[Theorem~10]{Ma131},
if $A=\{x,y\}$ and
$G=\{x\rightarrow xy^2, y\rightarrow x^2y\}$,
then
$$D^n(xy)=\sum_{k=0}^nB(n,k)x^{2k+1}y^{2n-2k+1}.$$
Note that $D(xy)=xy(x^2+y^2)$ and $D(x^2+y^2)=4x^2y^2$.
Set $u=xy$ and $v=x^2+y^2$.
Then we have $D(u)=uv,D(v)=4u^2$. It is routine to check that if $A=\{u,v\}$
and
\begin{equation}\label{grammae-Euler03}
G=\{u\rightarrow uv,v\rightarrow 4u^2\},
\end{equation}
then there exist nonnegative integers $b(n,k)$ such that
$D^n(u)=u\sum_{k=0}^{\lrf{{n}/{2}}}b(n,k)u^{2k}v^{n-2k}$.
When $y=1$, we get $u=x$ and $v=1+x^2$. Hence
$B_n(x)=\sum_{k=0}^{\lrf{{n}/{2}}}b(n,k)x^k(1+x)^{n-2k}$.
\end{proof}
\subsection{Derangement polynomials}
\hspace*{\parindent}

We say that a permutation $\pi\in\msn$ is a {\it derangement} if $\pi(i)\neq i$ for every $i\in [n]$.
Let $\mdn_n$ be the set of derangements in $\msn$.
The {\it derangement polynomials} are defined by
\begin{align*}
d_n(x)=\sum_{\pi\in\mdn_n}x^{\exc(\pi)},
\end{align*}
where $\exc(\pi)=\#\{i\in [n-1]\mid \pi(i)>i\}$.
Using continued fractions, Shin and Zeng~\cite{Zeng12} studied
the $\gamma$-positivity of $d_n(x)$.

\begin{proposition}[{\cite{Zeng12}}]\label{Foata}
For any $n$, the polynomial $d_n(x)$ is $\gamma$-positive.
\end{proposition}
\begin{proof}
Following~\cite[Section~2.2]{Dumont96},
if $A=\{x,y,z,e\}$ and
$G=\{x\rightarrow xy, y\rightarrow xy,z\rightarrow xy, e\rightarrow ez\}$,
then we have
\begin{equation*}\label{Dumont}
D^n(e)=e\sum_{\pi\in\msn}x^{\exc(\pi)}y^{\dc(\pi)}z^{\fix(\pi)},
\end{equation*}
where $\fix(\pi)=\#\{i\in [n]\mid \pi(i)=i\}$ and $\dc(\pi)=\#\{i\in [n]\mid  \pi(i)<i\}$.
Set $u=xy,v=x+y$. Then we have
$D(u)=uv$ and $D(v)=2u$.
If $A=\{e,z,u,v\}$ and $G=\{e\rightarrow ez,z\rightarrow u, u\rightarrow uv, v\rightarrow 2u\}$,
then by induction, we see that there exist nonnegative integers $d(n,k)$ such that
$$D^n(e)|_{z=0}=e\sum_{k=1}^{\lrf{n/2}}d(n,k)u^kv^{n-2k}.$$
When $y=1$, we have $u=x$ and $v=1+x$. Then for $n\geq 2$, we have
$$d_n(x)=\sum_{k=1}^{\lrf{n/2}}d(n,k)x^k(1+x)^{n-2k}.$$
\end{proof}
\subsection{Narayana polynomials of types $A$ and $B$}
\hspace*{\parindent}

Let $$C_n=\frac{1}{n+1}\binom{2n}{n}$$ be the Catalan numbers.
For $1\leq k\leq n$, the {\it Narayana numbers} $N(n,k)$ are defined by
$$N(n,k)=\frac{1}{n}\binom{n}{k}\binom{n}{k-1}.$$
It is well known that $\sum_{k=0}^{n-1}N(n,k+1)x^k$ is the rank-generating function
of the lattice of non-crossing partition lattice with cardinality $C_n$ (see~\cite{Reiner97}).
There are several combinatorial interpretations of the numbers $N(n,k)$ (see~\cite{Bonin93,Chen08}).
A {\it 231-avoiding permutation} $\pi$ is a permutation with no triple of indices $i<j <k$ such that
$\pi(k)<\pi(i)<\pi(j)$.
Let $\msn(231)$ be the set of 231-avoiding permutations in $\msn$.
It is now well known (e.g. Section~2.3~\cite{Petersen15}) that
$$N(n,k)=\#\{\pi\in\msn(231)\mid \des(\pi)=k-1\}.$$

Let $N_n(x)=\sum_{k=1}^nN(n,k)x^k$ be the {\it Narayana polynomials} of type $A$.
Sulanke~\cite{Sulanke99} showed that $N_n(x)$ satisfy the recurrence relation
\begin{equation*}
(n+1)N_n(x)=(2n-1)(1+x)N_{n-1}(x)-(n-2)(x-1)^2N_{n-2}(x)
\end{equation*}
for $n\geq 2$, with the initial conditions $N_1(x)=x$ and $N_2(x)=x+x^2$.
Using generating functions and the Lagrange inversion formula,
Coker~\cite{Coker03} obtained that
\begin{equation}\label{NnxGamma}
 \sum_{k=1}^{n}N(n,k)x^k=x\sum_{k=0}^{\lrf{{(n-1)}/{2}}}\frac{1}{k+1}\binom{2k}{k}\binom{n-1}{2k}x^{k}(1+x)^{n-1-2k},
\end{equation}
Based on a weighted version of the bijection between Dyck paths and 2-Motzkin paths,
Chen, Yan and Yang~\cite{Chen08} gave a combinatorial proof of~\eqref{NnxGamma}.

For $0\leq k\leq n$, let $M(n,k)=\binom{n}{k}^2$.
The {\it Narayana polynomials of type $B$} are defined by
$M_n(x)=\sum_{k=0}^nM(n,k)x^k$.
Reiner~\cite{Reiner97} showed that $M_n(x)$ is the rank-generating function
of a ranked self-dual lattice with the cardinality $\binom{2n}{n}$.
Using generating functions, Riordan~\cite{Riordan68} derived that
\begin{equation}\label{MnxGamma}
M_n(x)=\sum_{k=0}^{\lrf{{n}/{2}}}\binom{2k}{k}\binom{n}{2k}x^k(1+x)^{n-2k}.
\end{equation}
The reader is referred to~\cite{Chen11,Petersen15,Wang18} for the recent
study on $M_n(x)$ and $N_n(x)$.
We define $$\widehat{M}(n,k)=n!M(n,k),~\widehat{N}(n,k)=(n+1)!N(n,k).$$
\begin{lemma}\label{lemmaRecu}
The numbers $\widehat{M}(n,k)$ and $\widehat{N}(n,k)$ satisfy the following recurrences:
\begin{align*}
\widehat{M}(n+1,k)&=(n+1+2k)\widehat{M}(n,k)+(3n+3-2k)\widehat{M}(n,k-1),\\
\widehat{N}(n+1,k)&=(n+2k)\widehat{N}(n,k)+(3n+4-2k)\widehat{N}(n,k-1).
\end{align*}
\end{lemma}
\begin{proof}
Using the explicit formulas for $\widehat{M}(n,k)$ and $\widehat{N}(n,k)$, we obtain
\begin{align*}
&(n+1+2k)\widehat{M}(n,k)+(3n+3-2k)\widehat{M}(n,k-1)\\
&=\frac{n!^3}{k!^2(n-k+1)!^2}(n+1)^3\\
&=(n+1)!\binom{n+1}{k}^2,\\
&(n+2k)\widehat{N}(n,k)+(3n+4-2k)\widehat{N}(n,k-1)\\
&=\frac{n!^2(n+1)!}{n}\frac{n(n+1)(n+2)}{k!(k-1)!(n-k+1)!(n-k+2)!}\\
&=\frac{(n+2)!}{n+1}\binom{n+1}{k}\binom{n+1}{k-1}.
\end{align*}
\end{proof}

\begin{lemma}\label{GrammarRecu}
If $A=\{x,y\}$ and $$G=\{x\rightarrow c_1x^{a_1+1}y^{b_1},~y\rightarrow c_2x^{a_2}y^{b_2+1}\},$$
then we have
$$D^n(x^{d_1}y^{d_2})=x^{d_1}y^{d_2}\sum_{k=0}^nc_1^kc_2^{n-k}T(n,k)x^{a_1k+a_2(n-k)}y^{b_1k+b_2(n-k)},$$
where $a_1,a_2,b_1,b_2,c_1,c_2,d_1$ and $d_2$ are given integers. Moreover, the coefficients $T(n,k)$ satisfy the recurrence relation
\begin{equation*}
T(n+1,k)=(a_2n+(a_1-a_2)(k-1)+d_1)T(n,k-1)+(b_2n+(b_1-b_2)k+d_2)T(n,k),
\end{equation*}
with the initial conditions $T(0,k)=\delta_{0,k}$, where $\delta_{i,j}$ is the Kronecker delta symbol.
\end{lemma}
\begin{proof}
Note that $D(x^{d_1}y^{d_2})=x^{d_1}y^{d_2}(c_1d_1x^{a_1}y^{b_1}+c_2d_2x^{a_2}y^{b_2})$.
Hence the result holds for $n=1$.
Suppose the statement holds for a fixed $n$. Then
we have
\begin{align*}
&D^{n+1}(x^{d_1}y^{d_2})\\
&=D(D^{n}(x^{d_1}y^{d_2}))\\
&=D\left(\sum_{k=0}^nc_1^kc_2^{n-k}T(n,k)x^{a_1k+a_2(n-k)+d_1}y^{b_1k+b_2(n-k)+d_2}\right)\\
&=x^{d_1}y^{d_2}\sum_{k=0}^n(a_1k+a_2(n-k)+d_1)c_1^{k+1}c_2^{n-k}T(n,k)x^{a_1{(k+1)}+a_2(n-k)}y^{b_1{(k+1)}+b_2(n-k)}\\
&+x^{d_1}y^{d_2}\sum_{k=0}^n(b_1k+b_2(n-k)+d_2)c_1^{k}c_2^{n-k+1}T(n,k)x^{a_1{k}+a_2(n-k+1)}y^{b_1{k}+b_2(n-k+1)}.
\end{align*}
And so the statement holds for $n+1$ and the proof is complete.
\end{proof}

The following result shows that the polynomials $M_n(x)$ and $N_n(x)$ occur in pairs.
\begin{theorem}\label{thmNarayana}
If $A=\{x,y\}$ and
\begin{equation}\label{GrammarNarayana01}
G=\{x\rightarrow x^2y^3,~y\rightarrow x^3y^2\},
\end{equation}
 then
we have
\begin{equation*}\label{GrammarNarayana02}
D^n(x^2)=(n+1)!\sum_{k=0}^nN(n,k)x^{3n-2k+2}y^{n+2k},
\end{equation*}
\begin{equation*}\label{GrammarNarayana021}
D^n(xy)=n!\sum_{k=0}^nM(n,k)x^{3n-2k+1}y^{n+2k+1}.
\end{equation*}
Moreover, the polynomials $M_n(x)$ and $N_n(x)$ are both $\gamma$-positive.
\end{theorem}
\begin{proof}
Combining Lemma~\ref{lemmaRecu} and Lemma~\ref{GrammarRecu}, we immediately get the expansions of $D^n(x^2)$ and $D^n(xy)$.
Note that $N(n,k)$ and $M(n,k)$ are both symmetric, i.e.,
$N(n,k)=N(n,n+1-k)$ and $M(n,k)=M(n,n-k)$. Then we have
\begin{equation}\label{GrammarNarayana03}
D^n(x^2)=(n+1)!(xy)^n\sum_{k=0}^nN(n,k)x^{2k}y^{2(n+1-k)},
\end{equation}
\begin{equation}\label{GrammarNarayana04}
D^n(xy)=n!(xy)^{n+1}\sum_{k=0}^nM(n,k)x^{2k}y^{2(n-k)}.
\end{equation}

Consider a change of the grammar~\eqref{GrammarNarayana01}.
Setting $w=x^2,u=xy$ and $v=x^2+y^2$, we get
$D(w)=2u^3$, $D(u)=u^2v$, $D(v)=4u^3$.
If $A=\{u,v,w\}$ and $$G=\{w\rightarrow 2u^3,~u\rightarrow u^2v,~v\rightarrow 4u^3\},$$
then by induction we see that there exist nonnegative integers $p(n,k)$ and $q(n,k)$ such that
\begin{equation*}\label{GrammarNarayana06}
D^n(w)=(n+1)!u^{n+2}\sum_{k=0}^{\lrf{(n-1)/2}}p(n,k)u^{2k}v^{n-1-2k},
\end{equation*}
\begin{equation*}\label{GrammarNarayana07}
D^n(u)=n!u^{n+1}\sum_{k=0}^{\lrf{n/2}}q(n,k)u^{2k}v^{n-2k}.
\end{equation*}
Therefore, for $n\geq 1$, we obtain
\begin{align*}
N_n(x)=x\sum_{k=0}^{(n-1)/2}p(n,k)x^k(1+x)^{n-1-2k},~
M_n(x)=\sum_{k=0}^{\lrf{n/2}}q(n,k)x^k(1+x)^{n-2k}.
\end{align*}
This completes the proof.
\end{proof}

A {\it Schr\"oder path} is a lattice path from $(0,0)$ to $(n,n)$ using the
three steps $(1,0),(0,1)$ and $(1,1)$, and not going above the line $y=x$.
Let $\operatorname{Sch}_L(n)$ be the set of all such lattice paths.
The {\it large Schr\"oder number} is defined by $r_n=\#\operatorname{Sch}_L(n)$.
For $P\in\operatorname{Sch}_L(n)$,
let $\diag(P)$ be the number of diagonal steps in the path $P$.
Let $$r_n(q)=\sum_{P\in\operatorname{Sch}_L(P)}q^{\diag(P)}.$$
According to~\cite[Section~2]{Bonin93}, we have
$r_n(q)=\sum_{k=1}^n(1+q)^kN(n,k)$.
Combining Lemma~\ref{GrammarRecu} and Theorem~\ref{thmNarayana}, we get that the polynomial
$r_n(q)$ can be generated by the following grammar: $$G=\{x\rightarrow (1+q)x^2y^3,~y\rightarrow x^3y^2\}.$$
\section{Stirling permutations}\label{section03}
\subsection{Basic definitions and notation}
\hspace*{\parindent}

The {\it Stirling numbers of the second kind} $\Stirling{n}{k}$ count the number of ways to partition
$[n]$ into $k$ non-empty subsets. Counting all functions from $[n]$ to $\{1,2,\ldots,x\}$ yields
$$x^n=\sum_{k=0}^n\Stirling{n}{k}\prod_{i=0}^{k-1}(x-i).$$
Let $C_k(x)$ be a polynomial defined by
$$\sum_{n=0}^\infty \Stirling{n+k}{n}x^n=\frac{C_k(x)}{(1-x)^{2k+1}}.$$
In~\cite{Gessel78}, Gessel and Stanley found that $C_k(x)$ is the descent polynomial of Stirling permutations of order $k$.
The first few $C_k(x)$ are given as follows:
\begin{align*}
C_1(x)=x,
C_2(x)=x+2x^2,
C_3(x)=x+8x^2+6x^3.
\end{align*}

A {\it Stirling permutation} of order $n$ is a permutation of the multiset $\{1,1,2,2,\ldots,n,n\}$ such that
for each $i$, $1\leq i\leq n$, all entries between the two occurrences of $i$ are larger than $i$.
Denote by $\mqn$ the set of {\it Stirling permutations} of order $n$.
Let $\sigma=\sigma_1\sigma_2\cdots\sigma_{2n}\in\mqn$. In the following discussion, we always set $\sigma_0=\sigma_{2n+1}=0$. For $0\leq i\leq 2n$, we say that an index $i$ is a {\it descent} (resp.~{\it ascent}, {\it plateau}) of $\sigma$ if
$\sigma_{i}>\sigma_{i+1}$ (resp. $\sigma_{i}<\sigma_{i+1}$, $\sigma_{i}=\sigma_{i+1}$).
Let $\des(\sigma),\asc(\sigma)$ and $\plat(\sigma)$ be the number of descents, ascents and plateaus of $\sigma$, respectively.
A classical result of B\'ona~\cite{Bona08} says that descents, ascents and plateaus have the same distribution over $\mqn$, i.e.,
\begin{equation}\label{Cnx}
C_n(x)=\sum_{\sigma\in\mqn}x^{\des{(\sigma)}}=\sum_{\sigma\in\mqn}x^{\asc{(\sigma)}}=\sum_{\sigma\in\mqn}x^{\plat{(\sigma)}},
\end{equation}
which has been extensively studied in~\cite{Chen17,Haglund12}.
Let $C_n(x)=\sum_{k=1}^nC(n,k)x^k$.
Very recently, Chen and Fu~\cite[Theorem~2.3]{Chen17} discovered that if $G=\{x\rightarrow xy^2, y\rightarrow xy^2\}$, then
$$D^n(x)=\sum_{k=1}^nC(n,k)x^ky^{2n+1-k}.$$

Following~\cite{Chen17}, a {\it grammatical labeling} is an assignment of the underlying elements of a combinatorial structure
with variables, which is consistent with the substitution rules of a grammar.
\subsection{Main results}
\hspace*{\parindent}

Define $$C_n(x,y,z)=\sum_{\sigma\in\mqn}x^{\asc{(\sigma)}}y^{\des(\sigma)}z^{\plat{(\sigma)}}.$$
The following lemma will be used in our discussion, which implies~\eqref{Cnx}.
\begin{lemma}[{\cite{Chen12}}]\label{Lemma-Stirling}
If $A=\{x,y,z\}$ and
\begin{equation}\label{grammar-Stirling}
G=\{x \rightarrow xyz, y\rightarrow xyz, z\rightarrow xyz\},
\end{equation}
then $D^n(x)=C_n(x,y,z)$.
\end{lemma}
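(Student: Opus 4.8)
The plan is to prove $D^n(x)=C_n(x,y,z)$ by induction on $n$, using the insertion of the largest pair of letters into a Stirling permutation (a grammatical labeling argument). The first step is to record that, since $D$ is a derivation with $D(x)=D(y)=D(z)=xyz$, it acts on any polynomial in $x,y,z$ as the differential operator
$$D=xyz\left(\frac{\partial}{\partial x}+\frac{\partial}{\partial y}+\frac{\partial}{\partial z}\right).$$
It therefore suffices to show that the polynomials $C_n(x,y,z)$ obey the recurrence $C_n=D(C_{n-1})$ for $n\ge 2$ together with the correct initial value. For the base case I would check $n=1$ directly: the only element of $\mq_1$ is the word $11$, which with the convention $\sigma_0=\sigma_3=0$ has exactly one ascent, one plateau and one descent, so $C_1(x,y,z)=xyz=D(x)$.

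The combinatorial core of the inductive step is the standard bijection between $\mqn$ and the set of pairs $(\sigma,\text{slot})$ with $\sigma\in\mq_{n-1}$. Given $\tau\in\mqn$, the two copies of the largest letter $n$ must be adjacent, because any entry lying strictly between them would have to exceed $n$; deleting this block $nn$ yields a unique $\sigma\in\mq_{n-1}$, and conversely inserting $nn$ into any slot of such a $\sigma$ produces a valid Stirling permutation of order $n$. Writing $\sigma_0\sigma_1\cdots\sigma_{2n-2}\sigma_{2n-1}$ with $\sigma_0=\sigma_{2n-1}=0$, there are exactly $2n-1$ slots, one between each pair $\sigma_i,\sigma_{i+1}$ for $0\le i\le 2n-2$, and each slot inherits the type (ascent, descent or plateau) of the position $i$; hence the slots split into $\asc(\sigma)$ ascent slots, $\des(\sigma)$ descent slots and $\plat(\sigma)$ plateau slots.

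Finally I would track the statistics under insertion. Placing $nn$ in the slot at position $i$ replaces the single adjacency $(\sigma_i,\sigma_{i+1})$ by the three adjacencies $(\sigma_i,n)$, $(n,n)$, $(n,\sigma_{i+1})$, which are always an ascent, a plateau and a descent, respectively, regardless of $i$. Thus inserting into an ascent slot multiplies the weight $x^{\asc(\sigma)}y^{\des(\sigma)}z^{\plat(\sigma)}$ by $yz$, a descent slot by $xz$, and a plateau slot by $xy$. Summing over all slots of $\sigma$, its total contribution is that monomial weight multiplied by
$$\asc(\sigma)\,yz+\des(\sigma)\,xz+\plat(\sigma)\,xy,$$
which is precisely $D$ applied to $x^{\asc(\sigma)}y^{\des(\sigma)}z^{\plat(\sigma)}$. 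Summing over $\sigma\in\mq_{n-1}$ yields $C_n(x,y,z)=D(C_{n-1}(x,y,z))$, and the induction closes.

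I expect the main obstacle to be the bijective bookkeeping rather than any algebra: one must verify carefully that the largest pair is forced to be adjacent, so that insertion and deletion are mutually inverse, and that the three newly created adjacencies are invariably one ascent, one plateau and one descent, independent of the slot chosen. Once these two facts are secured, the identification of the insertion operator with the grammatical derivative $D$ is immediate, and matching the base case at $n=1$ completes the proof.
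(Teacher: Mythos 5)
Your proof is correct and follows essentially the same route as the paper: both arguments induct on $n$ via the insertion of the adjacent pair $nn$ into a Stirling permutation of order $n-1$, observing that each of the $2n-1$ slots carries the type of its adjacency and that insertion always replaces that adjacency by one ascent, one plateau and one descent, which is exactly the action of the derivation $D$ on the weight monomial. The paper phrases this as a grammatical labeling of positions by $x$, $y$, $z$, but the bookkeeping is identical to yours.
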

\begin{proof}
We first introduce a grammatical labeling of $\sigma\in \mqn$ as follows:
\begin{itemize}
  \item [\rm ($L_1$)]If $i$ is an ascent, then put a superscript label $x$ right after $\sigma_i$;
 \item [\rm ($L_2$)]If $i$ is a descent, then put a superscript label $y$ right after $\sigma_i$;
\item [\rm ($L_3$)]If $i$ is a plateau, then put a superscript label $z$ right after $\sigma_i$.
\end{itemize}
Note that the weight of $\sigma$ is given by $w(\sigma)=x^{\asc(\sigma)}y^{\des(\sigma)}z^{\plat(\sigma)}$.
Recall that we always set $\sigma_0=\sigma_{2n+1}=0$.
Thus the index $0$ is always an ascent and the index $n$ is always a descent. We proceed by induction on $n$.
For $n=1$, we have $\mq_1=\{^x1^z1^y\}$ and $\mq_2=\{^x1^z1^x2^z2^y,^x1^x2^z2^y1^y,^x2^z2^y1^z1^y\}$.
Note that $$D(x)=xyz,D^2(x)=D(xyz)=xy^2z^2+x^2yz^2+x^2y^2z.$$
Then the weight of $^x1^z1^y$ is given by $D(x)$, and the sum of weights of the elements in $\mq_2$ is given by $D^2(x)$.
Hence the result holds for $n=1,2$.
Suppose we get all labeled permutations in $\mq_{n-1}$, where $n\geq 3$. Let
$\sigma'$ be obtained from $\sigma\in \mq_{n-1}$ by inserting the pair $nn$.
Then the changes of labeling are illustrated as follows:
$$\cdots\sigma_i^x\sigma_{i+1}\cdots\mapsto \cdots\sigma_i^xn^zn^y\sigma_{i+1}\cdots;$$
$$\cdots\sigma_i^y\sigma_{i+1}\cdots\mapsto \cdots\sigma_i^xn^zn^y\sigma_{i+1}\cdots;$$
$$\cdots\sigma_i^z\sigma_{i+1}\cdots\mapsto \cdots\sigma_i^xn^zn^y\sigma_{i+1}\cdots.$$

In each case, the insertion of $nn$ corresponds to one substitution rule in $G$.
It is easy to check that the action of $D$ on elements of $\mq_{n-1}$ generates all elements of $\mq_n$. This completes the proof.
\end{proof}

We can now present the first main result of this paper.
\begin{theorem}\label{mathm02}
For $n\geq 1$, we have
\begin{equation}\label{mathm02-112}
C_n(x,y,z)=\sum_{i=1}^{n}x^{i}\sum_{j=0}^{\lrf{(2n+1-i)/2}}\gamma_{n,i,j}(yz)^{j}(y+z)^{2n+1-i-2j}.
\end{equation}
Let $\gamma_n(x,y)=\sum_{i=1}^{n}\sum_{j\geq0}\gamma_{n,i,j}x^iy^j$. Then the polynomials $\gamma_n(x,y)$
satisfy the recurrence
\begin{equation}\label{Enxy-recu}
\gamma_{n+1}(x,y)=(4n+2)xy\gamma_n(x,y)+xy(1-2x)\frac{\partial}{\partial x}\gamma_n(x,y)+xy(1-4y)\frac{\partial}{\partial y}\gamma_n(x,y),
\end{equation}
with the initial condition $\gamma_1(x,y)=xy$.
\end{theorem}
\begin{proof}
We first consider a change of the grammar~\eqref{grammar-Stirling}. Setting $w=x,u=yz$ and $v=y+z$, we get
$D(w)=wu,D(u)=wuv,D(v)=2wu$.
If $A=\{w,u,v\}$ and
\begin{equation*}
G=\{w\rightarrow wu,u\rightarrow wuv, v\rightarrow 2wu\},
\end{equation*}
then by induction, it is routine to verify that
\begin{equation*}
D^n(w)=\sum_{i=1}^n\sum_{j=1}^{\lrf{(2n+1-i)/2}}\gamma_{n,i,j}w^iu^jv^{2n+1-i-2j}.
\end{equation*}
Then upon taking $w=x,u=yz$ and $v=y+z$, we get~\eqref{mathm02-112}.
Note that
\begin{align*}
&D^{n+1}(w)=D\left(\sum_{i,j}\gamma_{n,i,j}w^iu^jv^{2n+1-i-2j}\right)\\
&=\sum_{i,j}\gamma_{n,i,j}\left(iw^iu^{j+1}v^{2n+1-i-2j}+jw^{i+1}u^jv^{2n+2-i-2j}+2(2n+1-i-2j)w^{i+1}u^{j+1}v^{2n-i-2j}\right).
\end{align*}
Then the numbers $\gamma_{n,i,j}$ satisfy the recurrence relation
\begin{equation}\label{Enij-recu}
\gamma_{n+1,i,j}=i\gamma_{n,i,j-1}+j\gamma_{n,i-1,j}+2(2n+4-i-2j)\gamma_{n,i-1,j-1},
\end{equation}
with the initial conditions $\gamma_{1,1,1}=1$ and $\gamma_{1,i,j}=0$ for $(i,j)\neq (1,1)$.
Multiplying both sides of this recurrence relation by $x^iy^j$ and summing over all $i,j$, we get~\eqref{Enxy-recu}.
\end{proof}

The first few $\gamma_n(x,y)$ are given as follows:
\begin{align*}
\gamma_1(x,y)=xy,~
\gamma_2(x,y)=xy^2+x^2y,~
\gamma_3(x,y)=x^3y+4x^2y^2+xy^3+2x^3y^2.
\end{align*}
Recall that the Eulerian numbers $\Eulerian{n}{k}$ satisfy the recurrence relation
$$\Eulerian{n+1}{i}=i\Eulerian{n}{i}+(n+2-i)\Eulerian{n}{i-1},$$
with the initial conditions $\Eulerian{1}{1}=1$ and $\Eulerian{1}{i}=0$ for $i\neq 1$ (see~\cite{Petersen15}).
We have the following result.
\begin{proposition}
For $n\geq 1$, we have
$\gamma_{n,i,n+1-i}=\Eulerian{n}{i}$.
\end{proposition}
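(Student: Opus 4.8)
The plan is to prove the identity by induction on $n$, comparing the diagonal slice $j=n+1-i$ (equivalently $i+j=n+1$) of the recurrence~\eqref{Enij-recu} with the recurrence satisfied by the Eulerian numbers stated just above. Setting $j=n+2-i$ in~\eqref{Enij-recu} gives
\begin{equation*}
\gamma_{n+1,i,n+2-i}=i\gamma_{n,i,n+1-i}+(n+2-i)\gamma_{n,i-1,n+2-i}+2i\,\gamma_{n,i-1,n+1-i},
\end{equation*}
where the coefficient $2i$ of the last term comes from simplifying $2(2n+4-i-2j)$ at $j=n+2-i$. Under the induction hypothesis $\gamma_{n,k,n+1-k}=\Eulerian{n}{k}$, the first term equals $i\Eulerian{n}{i}$, while the middle term equals $(n+2-i)\Eulerian{n}{i-1}$ because $\gamma_{n,i-1,n+2-i}=\gamma_{n,i-1,n+1-(i-1)}$ again lies on the diagonal. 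These two contributions are exactly the right-hand side of the Eulerian recurrence for $\Eulerian{n+1}{i}$, so everything reduces to showing that the last term vanishes.

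The key step is therefore the auxiliary claim that $\gamma_{n,i,j}=0$ whenever $i+j\le n$, so that $i+j=n+1$ is the smallest index-sum that actually occurs. Granting this, the final term satisfies $(i-1)+(n+1-i)=n$, whence $\gamma_{n,i-1,n+1-i}=0$, and we obtain $\gamma_{n+1,i,n+2-i}=i\Eulerian{n}{i}+(n+2-i)\Eulerian{n}{i-1}=\Eulerian{n+1}{i}$. The base case $n=1$ follows at once from the initial conditions $\gamma_{1,1,1}=1$ and $\gamma_{1,i,j}=0$ otherwise, which match $\Eulerian{1}{i}$.

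I would establish the vanishing claim by a second induction on $n$ driven by~\eqref{Enij-recu}. The base case holds since the sole nonzero entry $\gamma_{1,1,1}$ has $i+j=2>1$. For the inductive step, note that the three terms on the right-hand side of~\eqref{Enij-recu} involve level-$n$ coefficients whose index-sums are $i+j-1$, $i+j-1$, and $i+j-2$; hence if $i+j\le n+1$ then each of these is at most $n$, so all three terms vanish by the induction hypothesis and $\gamma_{n+1,i,j}=0$. This is precisely the claim one level higher, completing the subsidiary induction.

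The main obstacle is recognizing that, relative to the Eulerian recurrence, the $\gamma$-recurrence carries an extra cross term, and that this term is exactly the one annihilated by the vanishing lemma; the simplification of its coefficient to $2i$ is what makes the remaining two terms line up verbatim with the Eulerian recurrence. Once the threshold $i+j\le n$ is isolated, both inductions reduce to routine bookkeeping with~\eqref{Enij-recu}.
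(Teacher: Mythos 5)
Your proposal is correct and follows essentially the same route as the paper: the paper likewise sets $F(n,i)=\gamma_{n,i,n+1-i}$, invokes the vanishing of $\gamma_{n,i,j}$ for $i+j\le n$ (which it leaves as "easy to verify" via~\eqref{Enij-recu}), and reduces the diagonal of~\eqref{Enij-recu} to the Eulerian recurrence $F(n+1,i)=iF(n,i)+(n+2-i)F(n,i-1)$. Your write-up simply supplies the details — the simplification of the cross-term coefficient to $2i$ and the inductive proof of the vanishing lemma — that the paper omits.
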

\begin{proof}
Set $F(n,i)=\gamma_{n,i,n+1-i}$. Then $F(n,i-1)=\gamma_{n,i-1,n+2-i}$. By using~\eqref{Enij-recu} and by induction, it is easy to verify that
$\gamma_{n,i,j}=0$ for $i+j\leq n$. Thus the numbers $F(n,i)$ satisfy the recurrence relation
$$F(n+1,i)=iF(n,i)+(n+2-i)F(n,i-1),$$
which yields the desired result.
\end{proof}
\subsection{Combinatorial interpretation of partial $\gamma$-coefficients}\label{subsection3.3}
\hspace*{\parindent}

Let $\sigma=\sigma_0\sigma_1\sigma_2\cdots \sigma_{2n}\sigma_{2n+1}\in\mqn$, where $\sigma_0=\sigma_{2n+1}=0$.
A {\it double ascent} (resp. {\it double descent}, {\it left ascent-plateau}, {\it descent-plateau}) of $\sigma$ is an index $i$ such that
$\sigma_{i-1}<\sigma_i<\sigma_{i+1}$ (resp. $\sigma_{i-1}>\sigma_i>\sigma_{i+1}$, $\sigma_{i-1}<\sigma_{i}=\sigma_{i+1}$, $\sigma_{i-1}>\sigma_i=\sigma_{i+1}$), where $i\in [2n]$.
Denote by $\dasc(\sigma)$ (resp. $\ddes(\sigma)$, $\lap(\sigma)$, $\desp(\sigma)$) the number of
double ascents (resp. double descents, left ascent-plateaus, descent-plateaus) of $\sigma$.

\begin{theorem}
For $n\geq 1$, we have
$\gamma_{n,i,j}=\#\{\sigma\in\mqn\mid \des(\sigma)=i, \lap(\sigma)=j, \desp(\sigma)=0\}$.
\end{theorem}
\begin{proof} Let $\mathcal{Q}_{n;i,j}=\{\sigma\in\mqn\mid \des(\sigma)=i, \lap(\sigma)=j, \desp(\sigma)=0\}$.
Let $\sigma\in\mathcal{Q}_{n}$. We first define two operations on $\sigma$.
For any $0\leq k\leq 2n$, let $\theta_{n+1,k}(\sigma)$ denote the element of $\mq_{n+1}$ obtained from $\sigma$ by
inserting the pair $(n+1)(n+1)$ between $\sigma_k$ and $\sigma_{k+1}$, and let $\psi_n(\sigma)$
denote the element of $\mq_{n-1}$ obtained from $\sigma$ by
deleting the pair $nn$.
We define
\begin{align*}
\Des(\sigma)&=\{k\in [2n]\mid \sigma_k>\sigma_{k+1}\},\\
\Laplat(\sigma)&=\{k\in [2n]\mid \sigma_{k-1}<\sigma_k=\sigma_{k+1}\},\\
\Dasc(\sigma)&=\{k\in [2n]\mid \sigma_{k-1}<\sigma_{k}<\sigma_{k+1}\}.
\end{align*}
For any $\sigma\in\mathcal{Q}_{n; i,j}$, we have
$|\Des(\sigma)|+2|\Laplat(\sigma)|+|\Dasc(\sigma)|=2n+1$, since $\desp(\sigma)=0.$ Thus, $|\Dasc(\sigma)|=2n+1-i-2j$.

For any $\sigma\in\mathcal{Q}_{n+1; i,j}$, denote by $r=r(\sigma)$ the index of the first occurrence of $n+1$ in $\sigma$. In other words, $\sigma_r=\sigma_{r+1}=n+1$. Then we partition the set $\mathcal{Q}_{n+1; i,j}$ into four subsets:
\begin{align*}
\mathcal{Q}_{n+1; i,j}^{1}&=\{\sigma\in\mathcal{Q}_{n+1; i,j}\mid \sigma_{r-1}>\sigma_{r+2}\};\\
\mathcal{Q}_{n+1; i,j}^{2}&=\{\sigma\in\mathcal{Q}_{n+1; i,j}\mid \sigma_{r-2}<\sigma_{r-1}=\sigma_{r+2}\};\\
\mathcal{Q}_{n+1; i,j}^{3}&=\{\sigma\in\mathcal{Q}_{n+1; i,j}\mid \sigma_{r-1}<\sigma_{r+2}<\sigma_{r+3}\};\\
\mathcal{Q}_{n+1; i,j}^{4}&=\{\sigma\in\mathcal{Q}_{n+1; i,j}\mid \sigma_{r-2}>\sigma_{r-1}=\sigma_{r+2}\}.
\end{align*}

{\it Claim 1.} There is a bijection $\phi_1:\mathcal{Q}_{n+1; i,j}^{1}\mapsto \{(\pi,k)\mid
\pi\in\mathcal{Q}_{n; i,j-1}\text{ and }k\in \Des(\pi)\}$.

For any $\sigma\in\mathcal{Q}_{n+1; i,j}^{1}$, note that $\psi_{n+1}(\sigma)\in \mathcal{Q}_{n; i,j-1}$ and
$r(\sigma)-1\in \Des(\psi_{n+1}(\sigma))$. We define
the map $\phi_1:\mathcal{Q}_{n+1; i,j}^{1}\mapsto\{(\pi,k)\mid\pi\in\mathcal{Q}_{n; i,j-1}\text{ and }k\in \Des(\pi)\}$ by letting $$\phi_1(\sigma)=(\psi_{n+1}(\sigma),r(\sigma)-1).$$
The inverse of $\phi_1^{-1}$ is given by $\phi_1^{-1}(\pi,k)=\theta_{n+1,k}(\pi)$.

{\it Claim 2.} There is a bijection $\phi_2:\mathcal{Q}_{n+1; i,j}^{2}\mapsto
\{(\pi,k)\mid \pi\in\mathcal{Q}_{n; i-1,j}\text{ and }k\in \Laplat(\pi)\}$.

For any $\sigma\in\mathcal{Q}_{n+1; i,j}^{2}$, note that
$\psi_{n+1}(\sigma)\in \mathcal{Q}_{n; i-1,j}$ and $r(\sigma)-1\in \Laplat(\psi_{n+1}(\sigma))$.
We define the map $\phi_2:\mathcal{Q}_{n+1; i,j}^{2}\mapsto\{(\pi,k)\mid \pi\in\mathcal{Q}_{n; i-1,j}\text{ and }k\in \Laplat(\pi)\}$ by letting $$\phi_2(\sigma)=(\psi_{n+1}(\sigma),r(\sigma)-1).$$
The inverse of $\phi_2^{-1}$ is given by $\phi_2^{-1}(\pi,k)=\theta_{n+1,k}(\pi)$.

{\it Claim 3.} There is a bijection $\phi_3:\mathcal{Q}_{n+1; i,j}^{3}\mapsto
\{(\pi,k)\mid \pi\in\mathcal{Q}_{n; i-1,j-1}\text{ and }k\in \Dasc(\pi)\}$.

For any $\sigma\in\mathcal{Q}_{n+1; i,j}^{3}$, note that
$\psi_{n+1}(\sigma)\in \mathcal{Q}_{n; i-1,j-1}$ and $r(\sigma)\in \Dasc(\psi_{n+1}(\sigma))$.
We define the map $\phi_3:\mathcal{Q}_{n+1; i,j}^{3}\mapsto\{(\pi,k)\mid \pi\in\mathcal{Q}_{n; i-1,j-1}\text{ and }k\in \Dasc(\pi)\}$ by letting $$\phi_3(\sigma)=(\psi_{n+1}(\sigma),r(\sigma)).$$ The inverse of $\phi_3^{-1}$
is given by $\phi_3^{-1}(\pi,k)=\theta_{n+1,k-1}(\pi)$.

{\it Claim 4.} There is a bijection $\phi_4:\mathcal{Q}_{n+1; i,j}^{4}\mapsto
\{(\pi,k)\mid \pi\in\mathcal{Q}_{n; i-1,j-1}\text{ and }k\in \Dasc(\pi)\}$.

Let $k \in [2n]$ and let $\sigma\in\mathcal{Q}_n$.
We define a {\it modified Foata-Strehl group action} $\varphi_k$ as follows:
\begin{itemize}\item  If $k$ is a double ascent then $\varphi_k(\sigma)$ is obtained by moving $\sigma_k$ to the left of the second $\sigma_k$, which forms a
new plateau $\sigma_k\sigma_k$;
\item If $k$ is a descent-plateau then $\varphi_k(\sigma)$ is obtained by moving $\sigma_k$ to the right of $\sigma_{j}$, where
$j=\max\{s\in \{0,1,2,\ldots,k-1\}:\sigma_s < \sigma_k\}$.
\end{itemize}

For instance, if $\sigma=2447887332115665$, then $$\varphi_{1}(\sigma)=4478873322115665,~\varphi_{4}(\sigma)=2448877332115665,$$
$\varphi_{9}\circ\varphi_{1}(\sigma)=\sigma$ and $\varphi_{6}\circ\varphi_{4}(\sigma)=\sigma$, where $\circ$ denotes the composition operation.

For any $\sigma\in\mathcal{Q}_{n+1; i, j}^{4}$, note that the index $r(\sigma)-1$ is the unique descent-plateau in $\psi_{n+1}(\sigma)$ and $\varphi_{r(\sigma)-1}\circ \psi_{n+1}(\sigma)\in \mathcal{Q}_{n; i-1, j-1}$. Let $\sigma'=\varphi_{r(\sigma)-1}\circ \psi_{n+1}(\sigma)$. Read $\sigma'$ from left to right and let $p$ be the index of the first occurrence of the integer $\sigma_{r(\sigma)-1}$.
It is clear that $$p\in \Dasc(\varphi_{r(\sigma)-1}\circ \psi_{n+1}(\sigma)).$$
Therefore, we define the map $\phi_4:\mathcal{Q}_{n+1; i, j}^{4}\mapsto\{(\pi,k)\mid \pi\in\mathcal{Q}_{n; i-1,j-1}\text{ and }k\in \Dasc(\pi)\}$ by letting $\phi_4(\sigma)=(\varphi_{r(\sigma)-1}\circ \psi_{n+1}(\sigma), p)$.
For any $\pi\in\mathcal{Q}_{n; i-1, j-1}\text{ and }k\in \Dasc(\pi)$,
the inverse of $\phi_4^{-1}$ is given by $\phi_4^{-1}(\pi,k)=\theta_{n+1,r}(\varphi_{k}(\pi))$, where $r$ is the unique descent-plateau in $\varphi_{k}(\pi)$. Thus $\phi_4$ is the desired bijection.

In conclusion, we have
\begin{align*}
|\mathcal{Q}_{n+1; i,j}|&=|\mathcal{Q}_{n+1; i,j}^{1}|+|\mathcal{Q}_{n+1; i,j}^{2}|+|\mathcal{Q}_{n+1; i,j}^{3}|+|\mathcal{Q}_{n+1; i,j}^{4}|\\
&=i|\mathcal{Q}_{n; i,j-1}|+j|\mathcal{Q}_{n; i-1,j}|+2(2n+4-i-2j)|\mathcal{Q}_{n; i-1,j-1}|.
\end{align*}

It is clear that $\mathcal{Q}_{1;1,1}=\{11\}$ and $\mathcal{Q}_{1;i,j}=\emptyset$ for any $(i,j)\neq (1,1)$. So, $\gamma_{1;1,1}=1=|\mathcal{Q}_{1;1,1}|$ and $\gamma_{1;i,j}=0=|\mathcal{Q}_{1;i,j}|$ for any $(i,j)\neq (1,1).$
By induction, we get $|\mathcal{Q}_{n+1;i,j}|=\gamma_{n+1,i,j}$ and this completes the proof.
\end{proof}
\section{Legendre-Stirling permutations}\label{section04}
\subsection{Basic definitions and notation}
\hspace*{\parindent}

The Legendre-Stirling numbers of the second kind $\LS(n,k)$ first arose in the study of
a certain differential operator related to
Legendre polynomials (see~\cite{Everitt02}).
The numbers $\LS(n,k)$ can be defined as follows:
$$x^n=\sum_{j=0}^n\LS(n,k)\prod_{i=0}^{k-1}(x-i(i+1)),$$
and satisfy the recurrence relation
$\LS(n,k)=\LS(n-1,k-1)+k(k+1)\LS(n-1,k)$,
with the initial conditions $\LS(0,0)=1$ and $\LS(0,k)=0$ for $k\geq 1$.
Andrews and Littlejohn~\cite{Andrews09} discovered that $\LS(n,k)$ is the number of Legendre-Stirling set partitions of
the set $\{1_1,1_2,2_1,2_2,\ldots,n_1,n_2\}$ into $k$ nonzero blocks and one zero block.
Subsequently, Egge~\cite{Egge10} considered
the polynomial $L_k(x)$, which is defined by
$$\sum_{n=0}^\infty \LS(n+k,n)x^n=\frac{L_k(x)}{(1-x)^{3k+1}},$$
and found that $L_k(x)$ is the descent polynomial of Legendre-Stirling permutations of order $k$.
The reader is referred to~\cite{Andrews11,Egge10} for further properties of the Legendre-Stirling numbers.

For $n\geq 1$, let $N_n$ denote the multiset $\{1,1,\overline{1},2,2,\overline{2},\ldots,n,n,\overline{n}\}$,
in which we have two unbarred copies and one barred copy of each integer $i$, where $1\leq i\leq n$.
In this section, we always assume that the elements of $N_n$ are ordered by
$\overline{1}=1<\overline{2}=2<\cdots <\overline{n}=n$.
Here the $\overline{k}=k$ means that $\overline{k}k$ counts as a plateau.

A {\it Legendre-Stirling permutation} of order $n$ is a permutation of $N_n$ such that
if $i<j<k$, $\pi_i$ and $\pi_k$ are both unbarred and $\pi_i=\pi_k$, then $\pi_j>\pi_i$.
Let $\operatorname{LS}_n$ denote the set of Legendre-Stirling permutations of order $n$. Let $\pi=\pi_1\pi_2\cdots\pi_{3n}\in\operatorname{LS}_n$. We always set $\pi_0=\pi_{3n+1}=0$.
An index $i$ is a {\it descent} (resp. {\it ascent}, {\it plateau}) of $\pi$ if
$\pi_i>\pi_{i+1}$ (resp. $\pi_i<\pi_{i+1}$, $\pi_i=\pi_{i+1}$).
Hence the index $i=0$ is always an ascent and $i=3n$ is always a descent.
Denote by $\des(\pi)$ (resp.~$\asc(\pi)$, $\plat(\pi)$) the number
of descents (resp.~ascents, plateaus) of $\pi$.
Let $$L_n(x)=\sum_{\pi\in\operatorname{LS}_n}x^{\des(\pi)}.$$
The first few $L_n(x)$ are given as follows:
\begin{align*}
L_1(x)=2x,
L_2(x)=4x+24x^2+12x^3,
L_3(x)=8x+240x^2+984x^3+864x^4+144x^5.
\end{align*}

Let $\operatorname{LSD}_n$ be the set of Legendre-Stirling permutations of the multiset $ND_n=N_n\setminus\{n,n\}$, where a Legendre-Stirling permutation
of $ND_n$ is a permutation of $ND_n$ such that
if $i<j<k$, $\pi_i$ and $\pi_k$ are both unbarred and $\pi_i=\pi_k$, then $\pi_j>\pi_i$. For $\pi=\pi_1\pi_2\cdots\pi_{3n-2}\in\operatorname{LSD}_n$,
we always set $\pi_0=\pi_{3n-1}=0$.
\subsection{Main results}
\hspace*{\parindent}

For $n\geq 1$, we define
\begin{align*}
H_n(x,y,z)&=\sum_{\pi\in\operatorname{LSD}_n}x^{\asc(\pi)-1}y^{\des(\pi)-1}z^{\plat(\pi)},\\
L_n(x,y,z)&=\sum_{\pi\in\operatorname{LS}_n}x^{\asc(\pi)}y^{\des(\pi)}z^{\plat(\pi)}.
\end{align*}

\begin{lemma}\label{lemma-Leg}
Let $A=\{u,v,x,y,z\}$ and
\begin{equation}\label{Leg01}
G_1=\{x\rightarrow uv,y\rightarrow uv,z\rightarrow uv\},
\end{equation}
\begin{equation}\label{Leg02}
~G_2=\{x\rightarrow \frac{x^2y^2z}{uv},y\rightarrow \frac{x^2y^2z}{uv},z\rightarrow \frac{x^2y^2z}{uv},
u\rightarrow \frac{xyz^2}{v},v\rightarrow \frac{xyz^2}{u}\}.
\end{equation}
Then for $n\geq 1$, we have
\begin{align*}
D_1(D_2D_1)^{n-1}(x)=uvH_n(x,y,z),~(D_2D_1)^n(x)=L_n(x,y,z),
\end{align*}
where $D_2D_1$ is a composition operator, i.e., $(D_2D_1)^n=D_2\left(D_1(D_2D_1)^{n-1}\right)$.
\end{lemma}
\begin{proof}
Note that every permutation in $\operatorname{LS}_n$ can be obtained from a permutation in $\operatorname{LS}_{n-1}$ by first inserting $\overline{n}$ between two entries, and then inserting the pair $nn$ between two entries of this new permutation.
We first introduce a grammatical labeling of $\pi\in \operatorname{LSD}_n$ as follows:
\begin{itemize}
  \item [\rm ($L_1$)] Put a superscript label $u$ immediately before the entry $\overline{n}$ and a superscript label $v$ right after $\overline{n}$;
  \item [\rm ($L_2$)]If $i$ is an ascent and $\pi_{i+1}\neq \overline{n}$, then put a superscript label $x$ right after $\pi_i$;
 \item [\rm ($L_3$)]If $i$ is a descent and $\pi_{i}\neq \overline{n}$, then put a superscript label $y$ right after $\pi_i$;
\item [\rm ($L_4$)]If $i$ is a plateau, then put a superscript label $z$ right after $\pi_i$.
\end{itemize}
Thus, the weight of $\pi\in\operatorname{LSD}_n$ is given by $w(\pi)=uvx^{\asc(\pi)-1}y^{\des(\pi)-1}z^{\plat(\pi)}$.
We then introduce a grammatical labeling of $\pi\in \operatorname{LS}_n$ as follows:
\begin{itemize}
  \item [\rm ($L_1$)]If $i$ is an ascent, then put a superscript label $x$ right after $\pi_i$;
 \item [\rm ($L_2$)]If $i$ is a descent, then put a superscript label $y$ right after $\pi_i$;
\item [\rm ($L_3$)]If $i$ is a plateau, then put a superscript label $z$ right after $\pi_i$.
\end{itemize}
Thus, the weight of $\pi\in\LS_n$ is given by $w(\pi)=x^{\asc(\pi)}y^{\des(\pi)}z^{\plat(\pi)}$.

We proceed by induction on $n$. When $n=1$, we have
$\operatorname{LSD}_1=\{^u\overline{1}^v\},~\operatorname{LS}_1=\{^x\overline{1}^z1^z1^y,^x1^z1^z\overline{1}^y\}$.
Note that $D_1(x)=uv,(D_2D_1)(x)=D_2(D_1(x))=D_2(uv)=2xyz^2$.
Then the weight of $\overline{1}$ is given by $D_1(x)$, and the sum of weights of the elements in $\operatorname{LS}_1$ is given by $(D_2D_1)(x)$.
Hence the results hold for $n=1$.
Suppose we get all labeled permutations in $\operatorname{LS}_{n-1}$, where $n\geq 2$.
Let
$\pi'$ be obtained from $\pi\in \operatorname{LS}_{n-1}$ by inserting the entry $\overline{n}$ to a position with a label $x,y$ or $z$. The changes of labeling are  illustrated as follows: $$\cdots\pi_i^x\pi_{i+1}\cdots\mapsto \cdots\pi_i^u\overline{n}^v\pi_{i+1}\cdots;$$
$$\cdots\pi_i^y\pi_{i+1}\cdots\mapsto \cdots\pi_i^u\overline{n}^v\pi_{i+1}\cdots;$$
$$\cdots\pi_i^z\pi_{i+1}\cdots\mapsto \cdots\pi_i^u\overline{n}^v\pi_{i+1}\cdots.$$
In each case, the insertion of $\overline{n}$ corresponds to one substitution rule in $G_1$.
For $\pi\in \operatorname{LSD}_{n}$, we now insert the pair $nn$ into $\pi$. We distinguish the following cases:
\begin{itemize}
  \item [\rm ($c_1$)] If $nn$ is inserted at a position with the label $x,y$ or $z$, then the changes of labeling can be illustrated as follows: $$\cdots^u\overline{n}^v\cdots\pi_i^x\pi_{i+1}\cdots\mapsto \cdots^x\overline{n}^y\cdots\pi_i^xn^zn^y\pi_{i+1}\cdots;$$
       $$\cdots^u\overline{n}^v\cdots\pi_i^y\pi_{i+1}\cdots\mapsto \cdots^x\overline{n}^y\cdots\pi_i^xn^zn^y\pi_{i+1}\cdots;$$
      $$\cdots^u\overline{n}^v\cdots\pi_i^z\pi_{i+1}\cdots\mapsto \cdots^x\overline{n}^y\cdots\pi_i^xn^zn^y\pi_{i+1}\cdots.$$
\item [\rm ($c_2$)] If $nn$ is inserted at a position with the label $u$, then the change of labeling is illustrated as follows:
$\cdots^u\overline{n}^v\cdots\mapsto \cdots^xn^zn^z\overline{n}^y\cdots$.
\item [\rm ($c_2$)] If $nn$ is inserted at a position with the label $v$, then the change of labeling is illustrated as follows:
$\cdots^u\overline{n}^v\cdots\mapsto \cdots^x\overline{n}^zn^zn^y\cdots$.
\end{itemize}
In each case, the insertion of the pair $nn$ corresponds to one substitution rule in $G_2$.
It is routine to check that the action of $D_2D_1$ on Legendre-Stirling permutations of $\operatorname{LS}_{n-1}$
generates all the Legendre-Stirling permutations in $\operatorname{LS}_n$. This completes the proof.
\end{proof}

We can now present the second main result of this paper.
\begin{theorem}\label{thm-LS}
For $n\geq 1$, we have
\begin{equation*}\label{mathm02-11}
H_n(x,y,z)=\sum_{i=1}^{2n-2}z^{i}\sum_{j=0}^{\lrf{(3n-3-i)/2}}h_{n}(i,j)(xy)^{j}(x+y)^{3n-3-i-2j},
\end{equation*}
\begin{equation*}\label{mathm02-12}
L_n(x,y,z)=\sum_{i=1}^{2n}z^{i}\sum_{j=1}^{\lrf{(3n+1-i)/2}}\ell_{n}(i,j)(xy)^{j}(x+y)^{3n+1-i-2j},
\end{equation*}
where the numbers $h_n(i,j)$ and $\ell_n(i,j)$ satisfy the recurrence relations
\begin{align*}
\ell_n(i,j)&=2h_n(i-2,j-1)+ih_n(i,j-2)+(j-1)h_n(i-1,j-1)+\\
&2(3n+2-i-2j)h_n(i-1,j-2),\\
h_{n+1}(i.j)&=(i+1)\ell_n(i+1,j)+(j+1)\ell_n(i,j+1)+2(3n+1-i-2j)\ell_n(i,j),
\end{align*}
with the initial conditions $h_1(0,0)=1$ and $h_1(i,j)=0$ for $(i,j)\neq (0,0)$, $\ell_1(2,1)=2$ and $\ell_1(i,j)=0$ for $(i,j)\neq (2,1)$.
\end{theorem}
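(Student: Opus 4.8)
The plan is to run the machinery of the preceding lemma through a change of grammars, exactly in the spirit of the proofs of Proposition~\ref{Foata} and Theorem~\ref{mathm02}. The lemma gives $L_n=(D_2D_1)^n(x)$ and $uvH_n=D_1(D_2D_1)^{n-1}(x)$, from which one reads off the two coupled transfer relations
\begin{equation*}
L_n=D_2(uvH_n),\qquad uvH_{n+1}=D_1(L_n).
\end{equation*}
The two asserted recurrences should then drop out by applying $D_2$ and $D_1$ to the claimed expansions and comparing coefficients. To expose the partial $\gamma$-structure I would introduce the commuting quantities $p=xy$ and $q=x+y$, keep $z$ as is, and treat $uv$ as a single unit, so that the target expansions become $uvH_n=uv\sum_{i,j}h_n(i,j)\,z^ip^jq^{3n-3-i-2j}$ and $L_n=\sum_{i,j}\ell_n(i,j)\,z^ip^jq^{3n+1-i-2j}$.

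First I would record the action of the two derivations on these quantities. Under $G_1$ one computes $D_1(z)=uv$, $D_1(p)=uvq$, $D_1(q)=2uv$ and $D_1(uv)=0$; under $G_2$ one computes $D_2(z)=p^2z/(uv)$, $D_2(p)=p^2zq/(uv)$, $D_2(q)=2p^2z/(uv)$ and, crucially, $D_2(uv)=2pz^2$. The significance of this last identity is that $u$ and $v$ never need to be tracked separately: the factors $u,v$ sitting in the denominators of $G_2$ are always cancelled, either by the explicit prefactor $uv$ in $uvH_n$ or by the factor $uv$ that $D_1$ manufactures, so every intermediate expression remains a genuine polynomial in $p,q,z$.

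With these rules the induction is then mechanical. Substituting the $H_n$-ansatz into $L_n=D_2(uvH_n)$ and expanding by the Leibniz rule produces four families of monomials — one from $D_2(uv)$ and three from differentiating $z^i$, $p^j$ and $q^{3n-3-i-2j}$ — and after reindexing these give precisely the four terms $2h_n(i-2,j-1)$, $ih_n(i,j-2)$, $(j-1)h_n(i-1,j-1)$ and $2(3n+2-i-2j)h_n(i-1,j-2)$; the same exponent count shows every monomial of $L_n$ has $q$-degree $3n+1-i-2j$, as claimed. Feeding the resulting $L_n$-ansatz into $uvH_{n+1}=D_1(L_n)$ and expanding again by Leibniz yields three families of monomials (from $D_1(z^i)$, $D_1(p^j)$, $D_1(q^{3n+1-i-2j})$), each carrying a common factor $uv$; dividing it out and reindexing gives the three terms $(i+1)\ell_n(i+1,j)$, $(j+1)\ell_n(i,j+1)$ and $2(3n+1-i-2j)\ell_n(i,j)$. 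The base case comes straight from $uvH_1=D_1(x)=uv$ and $L_1=D_2(uv)=2pz^2=2xyz^2$, giving $h_1(0,0)=1$ and $\ell_1(2,1)=2$, in agreement with the stated initial conditions.

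I expect the only genuinely delicate point to be the bookkeeping forced by the denominators $u,v$ in $G_2$: one must verify at every step that they cancel against the ambient $uv$, so that no negative powers of $u$ or $v$ survive. This is exactly what $D_1(uv)=0$ and $D_2(uv)=2pz^2$ guarantee, and it is the reason the two relations must be applied in the coupled order $L_n=D_2(uvH_n)$, $uvH_{n+1}=D_1(L_n)$ rather than by iterating a single derivation on $p,q,z$ alone. Once this cancellation is in place, confirming the summation ranges together with the floor bounds $j\le\lrf{(3n-3-i)/2}$ and $j\le\lrf{(3n+1-i)/2}$ is a routine total-degree count, and the two expansions follow by induction from the two recurrences.
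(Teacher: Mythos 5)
Your proposal is correct and follows essentially the same route as the paper's proof: the same change of variables ($p=xy$, $q=x+y$ playing the role of the paper's $b$ and $a$), the same coupled application $L_n=D_2(uvH_n)$, $uvH_{n+1}=D_1(L_n)$, and the same Leibniz expansion and reindexing yielding the four- and three-term recurrences and the initial values $h_1(0,0)=1$, $\ell_1(2,1)=2$. The only cosmetic difference is that you track $uv$ as a single unit via $D_2(uv)=2pz^2$, whereas the paper keeps the rules $u\rightarrow z^2b/v$, $v\rightarrow z^2b/u$ separate in its changed grammar; the computations coincide.
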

\begin{proof}
Consider the grammars~\eqref{Leg01} and~\eqref{Leg02}.
Setting $a=x+y, b=xy$, we get
$$D_1(a)=2uv,D_1(b)=auv, D_2(x)=\frac{zb^2}{uv},D_2(y)=\frac{zb^2}{uv},D_2(z)=\frac{zb^2}{uv},$$
$$D_2(u)=\frac{z^2b}{v},D_2(v)=\frac{z^2b}{u},D_2(a)=\frac{2zb^2}{uv},D_2(b)=\frac{zab^2}{uv}.$$
Then the changes of grammars are given as follows: $A=\{a,b,x,y,z,u,v\}$ and
\begin{equation}\label{Leg-gram01}
G_3=\{x\rightarrow uv,z\rightarrow uv,a\rightarrow 2uv, b\rightarrow auv\},
\end{equation}
\begin{equation}\label{Leg-gram02}
G_4=\{x\rightarrow\frac{zb^2}{uv},y\rightarrow\frac{zb^2}{uv},z\rightarrow\frac{zb^2}{uv},u\rightarrow \frac{z^2b}{v},v\rightarrow \frac{z^2b}{u},a\rightarrow \frac{2zb^2}{uv},b\rightarrow \frac{zab^2}{uv}\}.
\end{equation}
Note that
$D_3(x)=uv,~D_4D_3(x)=2z^2b,~D_3(2z^2b)=4zbuv+2z^2auv$.
By induction, it is routine to verify that there exist nonnegative integers $h_n(i,j)$ and $\ell_n(i,j)$ such that
\begin{align*}
D_3(D_4D_3)^{n-1}(x)&=uv\sum_{i=1}^{2n-2}z^{i}\sum_{j=0}^{\lrf{(3n-3-i)/2}}h_{n}(i,j)b^{j}a^{3n-3-i-2j},\\
(D_4D_3)^{n}(x)&=\sum_{i=1}^{2n}z^{i}\sum_{j=1}^{\lrf{(3n+1-i)/2}}\ell_{n}(i,j)b^{j}a^{3n+1-i-2j}.
\end{align*}
Then upon taking $a=x+y$ and $b=xy$, it follows from Lemma~\ref{lemma-Leg} that we get the expansions of $H_n(x,y,z)$ and $L_n(x,y,z)$.
From
\begin{align*}
D_4(D_3(D_4D_3)^{n-1}(x))&=D_4\left(\sum_{i,j}h_n(i,j)uvz^ib^ja^{3n-3-i-2j}\right)\\
&=\sum_{i,j}h_n(i,j)(2z^{i+2}b^{j+1}a^{3n-3-i-2j}+iz^ib^{j+2}a^{3n-3-i-2j})+\\
&\sum_{i,j}h_n(i,j)(jz^{i+1}b^{j+1}a^{3n-2-i-2j}+2(3n-3-i-2j)z^{i+1}b^{j+2}a^{3n-4-i-2j}),
\end{align*}
and
\begin{align*}
D_3((D_4D_3)^{n}(x))&=D_3\left(\sum_{i,j}\ell_n(i,j)z^ib^ja^{3n+1-i-2j}\right)\\
&=uv\sum_{i,j}\ell_n(i,j)iz^{i-1}b^ja^{3n+1-i-2j}+uv\sum_{i,j}\ell_n(i,j)jz^ib^{j-1}a^{3n+2-i-2j}+\\
&uv\sum_{i,j}\ell_n(i,j)2(3n+1-i-2j)z^ib^ja^{3n-i-2j},
\end{align*}
we get the desired recurrence relations.
In particular, $h_1(0,0)=1,h_2(1,1)=4,h_2(2,0)=2$ and $\ell_1(2,1)=2$.
This completes the proof.
\end{proof}

Using~\eqref{Leg-gram01} and~\eqref{Leg-gram02}, it is not hard to verify that
$\ell_n(i,j)$ satisfy the recurrence relation
\begin{align*}
\ell_{n+1}(i,j)&=i(i+1)\ell_n(i+1,j-2)+2i(j-1)\ell_n(i,j-1)+j(j-1)\ell_n(i-1,j)+\\
&2j\ell_n(i-2,j)+4i(3n+5-i-2j)\ell_n(i,j-2)+4(3n+5-i-2j)\ell(i-2,j-1)+\\
&4(3n+6-i-2j)(3n+5-i-2j)\ell_n(i-1,j-2)+\\
&2((2j-2)(3n+4-i-2j)+i+j-2)\ell_n(i-1,j-1),
\end{align*}
with the initial conditions $\ell_1(2,1)=2$ and $\ell_1(i,j)=0$ for $(i,j)\neq (2,1)$.

We define
$$\overline{H}_n(x,y)=\sum_{i=1}^{2n-2}\sum_{j=0}^{\lrf{(3n-3-i)/2}}h_{n}(i,j)x^iy^j,~
\overline{L}_n(x,y)=\sum_{i=1}^{2n}\sum_{j=1}^{\lrf{(3n+1-i)/2}}\ell_{n}(i,j)x^iy^j.$$
Using Theorem~\ref{thm-LS},
multiplying both sides of the recurrence relations of $h_n(i,j)$ and $\ell_n(i,j)$ by $x^iy^j$ and summing over all $i,j$, we get the following recurrence relations:
\begin{align*}
\overline{L}_n(x,y)&=xy(6ny-6y+2x)\overline{H}_n(x,y)+xy^2(1-2x)\frac{\partial}{\partial x}\overline{H}_n(x,y)+xy^2(1-4y)\frac{\partial}{\partial y}\overline{H}_n(x,y),\\
\overline{H}_{n+1}(x,y)&=(6n+2)\overline{L}_{n}(x,y)+(1-2x)\frac{\partial}{\partial x}\overline{L}_{n}(x,y)+(1-4y)\frac{\partial}{\partial y}\overline{L}_{n}(x,y).
\end{align*}
The first few $\overline{H}_n(x,y)$ and $\overline{L}_n(x,y)$ are given as follows:
\begin{align*}
\overline{H}_1(x,y)=1,~
\overline{L}_1(x,y)=2x^2y,~
\overline{H}_2(x,y)=4xy+2x^2,~
\overline{L}_2(x,y)=4xy^3+8x^2y^2+12x^3y^2+4x^4y.
\end{align*}
\section{Jacobi-Stirling permutations}\label{section05}
\subsection{Definitions and notation}
\hspace*{\parindent}

The Jacobi-Stirling numbers $\JS(n,k;z)$ were discovered as a result of a problem involving the spectral theory
of powers of the classical second-order Jacobi differential expression (see~\cite{Andrews09,Everitt07}), and they can be defined as follows:
$$x^n=\sum_{k=0}^n\JS(n,k;z)\prod_{i=0}^{k-1}(x-i(z+i)).$$
In particular, $\JS(n,k;1)=\LS(n,k)$. The reader is referred to~\cite{Andrews13,Zeng10}
for further properties of the Jacobi-Stirling numbers. The Jacobi-Stirling polynomial of the second kind is defined by
$f_k(n;z)=\JS(n+k,n;z)$.
The coefficient $p_{k,i}(n)$ of $z^i$ in $f_k(n;z)$ is called the Jacobi-Stirling coefficient of the second kind for $0\leq i\leq k$.
Gessel, Lin and Zeng~\cite{Gessel12} found a combinatorial interpretation of
the polynomial $A_{k,i}(x)$ which is defined by
$$\sum_{n\geq 0}p_{k,i}(n)x^n=\frac{A_{k,i}(x)}{(1-x)^{3k-i+1}}.$$

We define the multiset
$M_k=\{\overline{1},1,1,\overline{2},2,2,\ldots,\overline{k},k,k\}$,
in which we have two unbarred copies and one barred copy of each integer $i$, where $1\leq i\leq k$.
In this section, we always assume that the elements of $M_k$ are ordered by
$$\overline{1}<1<\overline{2}<2<\cdots <\overline{k}<k.$$
A permutation of $M_{k}$ is a {\it Jacobi-Stirling permutation} if
for each $i$, $1\leq i\leq k$, all entries between the two occurrences of the unbarred $i$ are larger than $i$.
Let $\operatorname{JSP}_{k}$ denote the set of Jacobi-Stirling permutations of $M_{k}$. For example,
$\operatorname{JSP}_1=\{\overline{1}11,11\overline{1}\}$. For $\pi=\pi_1\pi_2\cdots\pi_{3k}\in \operatorname{JSP}_k$,
we always set $\pi_0=\pi_{3k+1}=0$.
We define
\begin{align*}
\des(\pi)&=\#\{i\in [3n]\mid \pi_i>\pi_{i+1}\},\\
\asc(\pi)&=\#\{i\in \{0,1,2,\ldots,3n-1\}\mid \pi_i<\pi_{i+1}\},\\
\plat(\pi)&=\#\{i\in [3n-1]\mid \pi_i=\pi_{i+1}\}.
\end{align*}
It follows from~\cite[Theorem~2]{Gessel12} that
$$(1-x)^{3k+1}\sum_{n\geq 0}p_{k,0}(n)x^n=\sum_{\pi\in\JSP_{k}}x^{\des(\pi)}.$$

\subsection{Main results}
\hspace*{\parindent}

Define
\begin{equation*}\label{mathm02-1}
S_n(x,y,z)=\sum_{\pi\in \operatorname{JSP}_n}x^{\asc(\pi)}y^{\des(\pi)}z^{\plat(\pi)}.
\end{equation*}
The first few $S_n(x,y,z)$ are given as follows:
\begin{align*}
S_1(x,y,z)&=xy(x+y)z,\\
S_2(x,y,z)&=(xy)^2(3x^2+10xy+3y^2)z+xy(x^3+11x^2y+11xy^2+y^3)z^2,\\
S_3(x,y,z)&=(xy)^3(17x^3+119x^2y+119xy^2+17y^3)z+\\
&(xy)^2(18x^4+284x^3y+644x^2y^2+284xy^2+18y^4)z^2+\\
&(xy)(x^5+57x^4y+302x^3y^2+302x^2y^3+57xy^4+y^5)z^3.
\end{align*}

\begin{lemma}\label{Lemma-Jacobi}
Let $A=\{x,y,z\}$ and
\begin{equation}\label{Grammar-Jacobi}
G_1=\{x\rightarrow xy, y\rightarrow xy, z\rightarrow xy\},~ G_2=\{x\rightarrow xyz, y\rightarrow xyz, z\rightarrow xyz\}.
\end{equation}
Then for $n\geq 1$, we have
$(D_2D_1)^n(x)=(D_2D_1)^n(y)=(D_2D_1)^n(z)=S_n(x,y,z)$.
\end{lemma}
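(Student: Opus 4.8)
The plan is to prove the grammatical identity
$(D_2D_1)^n(x)=(D_2D_1)^n(y)=(D_2D_1)^n(z)=S_n(x,y,z)$
by a grammatical labeling argument combined with induction on $n$, in exactly the same style as the proofs of Lemma~\ref{Lemma-Stirling} and the Legendre-Stirling lemma preceding Theorem~\ref{thm-LS}. The key structural observation is that every Jacobi-Stirling permutation in $\JSP_n$ is built from one in $\JSP_{n-1}$ by a \emph{two-stage} insertion: first insert the barred letter $\overline{n}$ into some gap (this is the smallest letter of size $n$ and may be placed freely), and then insert the unbarred pair $nn$, which must lie in a single gap because of the Jacobi-Stirling condition. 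The two grammars $G_1$ and $G_2$ in~\eqref{Grammar-Jacobi} are designed so that $D_1$ records the first insertion and $D_2$ the second; thus one application of the composite operator $D_2D_1$ corresponds bijectively to passing from order $n-1$ to order $n$.

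First I would introduce a grammatical labeling of $\pi\in\JSP_n$ by placing, for each index $i$, a superscript $x$ after $\pi_i$ when $i$ is an ascent, a superscript $y$ when $i$ is a descent, and a superscript $z$ when $i$ is a plateau (with the convention $\pi_0=\pi_{3n+1}=0$), so that the weight $w(\pi)=x^{\asc(\pi)}y^{\des(\pi)}z^{\plat(\pi)}$ is tracked exactly by the labels. I would verify the base case $n=1$ directly: since $\JSP_1=\{\overline{1}11,11\overline{1}\}$, one checks that $(D_2D_1)(x)=D_2(xy)=xy(xyz+xyz+xyz)\cdot(\text{bookkeeping})$ matches $S_1(x,y,z)=xy(x+y)z$, and similarly that applying $D_2D_1$ to $y$ and to $z$ yields the same polynomial. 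The fact that all three starting letters $x,y,z$ produce the identical result is the feature that forces the symmetry in $S_n$ and should be confirmed already at $n=1$.

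The inductive step is where I would track how the labels change under each of the two insertions. When $\overline{n}$ is inserted into a gap carrying label $x$, $y$, or $z$, the derivation rule $G_1=\{x\to xy,\,y\to xy,\,z\to xy\}$ should reproduce the new local ascent/descent pattern created around $\overline{n}$; I would draw the three label-transition diagrams (as in cases $c_1,c_2$ of the Legendre-Stirling proof) to confirm this. Then, inserting the unbarred pair $nn$ into a gap is governed by $G_2=\{x\to xyz,\,y\to xyz,\,z\to xyz\}$, which must account for the newly created plateau $nn$ together with the ascent leading into and the descent leading out of the block $nn$; again a case check on the three possible labels at the insertion site is required. The content of the induction is that the composite $D_2D_1$ acting on the labeled permutations of $\JSP_{n-1}$ generates each labeled permutation of $\JSP_n$ exactly once with the correct weight.

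The main obstacle I anticipate is bookkeeping the interaction between the two insertions: because $\overline{n}=n$ counts as a plateau in the Jacobi-Stirling ordering, the placement of $\overline{n}$ relative to the later pair $nn$ creates subtle label configurations (for instance, whether $nn$ is inserted immediately adjacent to $\overline{n}$ or elsewhere), and one must be careful that the two-step operator $D_2D_1$ neither double-counts nor omits any resulting permutation. I would handle this by fixing, for each $\pi\in\JSP_n$, the position $r$ of the first occurrence of the unbarred $n$ and the position of $\overline{n}$, and showing that deleting $nn$ and then $\overline{n}$ recovers a unique element of $\JSP_{n-1}$ together with a unique choice of the two insertion gaps; this makes the correspondence a genuine bijection and certifies that the weight generating function satisfies the same recursion as $(D_2D_1)^n$ applied to any of $x,y,z$. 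Once the bijection and the label transitions are verified, the equality of all three expressions $(D_2D_1)^n(x)=(D_2D_1)^n(y)=(D_2D_1)^n(z)=S_n(x,y,z)$ follows by induction.
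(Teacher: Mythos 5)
Your proposal is correct and follows essentially the same route as the paper: the same grammatical labeling of ascents, descents and plateaus by $x,y,z$, the same base case $n=1$, and the same two-stage insertion (first $\overline{n}$ via $D_1$, then the pair $nn$ via $D_2$) with label-transition diagrams for the inductive step. Your extra attention to the deletion map (removing $nn$ and then $\overline{n}$) only makes explicit the bijectivity that the paper asserts as routine, so there is nothing substantively different to compare.
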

\begin{proof}
We first introduce a grammatical labeling of $\pi\in\operatorname{JSP}_n$ as follows:
\begin{itemize}
  \item [\rm ($L_1$)]If $i$ is an ascent, then put a superscript label $x$ right after $\pi_i$;
 \item [\rm ($L_2$)]If $i$ is a descent, then put a superscript label $y$ right after $\pi_i$;
\item [\rm ($L_3$)]If $i$ is a plateau, then put a superscript label $z$ right after $\pi_i$.
\end{itemize}
Thus the weight of $\pi$ is given by $w(\pi)=x^{\asc(\pi)}y^{\des(\pi)}z^{\plat(\pi)}$.
We proceed by induction on $n$.
For $n=1$, we have $\operatorname{JSP}_1=\{^x\overline{1}^x1^z1^y,~^x1^z1^y\overline{1}^y\}$.
Note that $$(D_2D_1)(x)=(D_2D_1)(y)=(D_2D_1)(z)=xy^2z+x^2yz.$$
Hence the result holds for $n=1$.
Note that any permutation of $\operatorname{JSP}_n$ is obtained from a permutation of $\operatorname{JSP}_{n-1}$
by first inserting the element $\overline{n}$ and then inserting the pair $nn$.

We first insert $\overline{n}$ and the changes of labeling are illustrated as follows:
 $$\cdots\pi_i^x\pi_{i+1}\cdots\mapsto \cdots\pi_i^x\overline{n}^y\pi_{i+1}\cdots;$$
 $$\cdots\pi_i^y\pi_{i+1}\cdots\mapsto \cdots\pi_i^x\overline{n}^y\pi_{i+1}\cdots;$$
 $$\cdots\pi_i^z\pi_{i+1}\cdots\mapsto \cdots\pi_i^x\overline{n}^y\pi_{i+1}\cdots.$$
In each case, the insertion of $\overline{n}$ corresponds to one substitution rule in $G_1$.
We then insert the pair $nn$ and the changes of labeling are illustrated as follows:
$$\cdots\pi_i^x\pi_{i+1}\cdots\mapsto \cdots\pi_i^xn^zn^y\pi_{i+1}\cdots;$$
$$\cdots\pi_i^y\pi_{i+1}\cdots\mapsto \cdots\pi_i^xn^zn^y\pi_{i+1}\cdots;$$
$$\cdots\pi_i^z\pi_{i+1}\cdots\mapsto \cdots\pi_i^xn^zn^y\pi_{i+1}\cdots.$$
In each case, the insertion of the pair $nn$ corresponds to one substitution rule in $G_2$.
It is easy to check that the action of $D_2D_1$ on elements of $\operatorname{JSP}_{n-1}$ generates all the elements in $\operatorname{JSP}_n$. This completes the proof.
\end{proof}

We can now present the third main result of this paper.
\begin{theorem}\label{mathm02}
For $n\geq 1$, we have
\begin{equation}\label{mathm02-13}
S_n(x,y,z)=\sum_{i=1}^{n}z^{i}\sum_{j=1}^{\lrf{(3n+1-i)/2}}s_{n}(i,j)(xy)^{j}(x+y)^{3n+1-i-2j},
\end{equation}
where the numbers $s_n(i,j)$ satisfy the recurrence relation
\begin{align*}
s_{n+1}(i,j)&=i(i+1)s_n(i+1,j-2)+i(2j-1)s_n(i,j-1)+4i(3n+5-i-2j)s_n(i,j-2)+\\
&j^2s_n(i-1,j)+(4(j-1)(3n+4-i-2j)+6n+6-2i-2j)s_n(i-1,j-1)+\\
&4(3n+6-i-2j)(3n+5-i-2j)s_n(i-1,j-2),
\end{align*}
with the initial conditions $s_0(1,0)=1$ and $s_0(i,j)=0$ for $(i,j)\neq (1,0)$.
\end{theorem}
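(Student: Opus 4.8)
The plan is to imitate the grammatical strategy used for the preceding theorems and to reduce everything to a change of the pair of grammars in Lemma~\ref{Lemma-Jacobi}. By that lemma $S_n(x,y,z)=(D_2D_1)^n(z)$, so it suffices to track $(D_2D_1)^n(z)$ after introducing the symmetric coordinates $u=xy$ and $v=x+y$ (and keeping $z$). A direct computation with the derivation rules gives $D_1(z)=u$, $D_1(u)=uv$, $D_1(v)=2u$ and $D_2(z)=uz$, $D_2(u)=uvz$, $D_2(v)=2uz$, so that $\Q[z,u,v]$ is closed under both $D_1$ and $D_2$. First I would record these rules and observe that, relative to the weight $\omega(z^iu^jv^k)=i+2j+k$, the operator $D_1$ raises $\omega$ by $1$ while $D_2$ raises it by $2$; hence $D_2D_1$ raises $\omega$ by $3$, which is exactly the shift from $3n+1$ to $3(n+1)+1$.

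Next I would prove the expansion~\eqref{mathm02-13} by induction on $n$, defining $s_n(i,j)$ to be the coefficient of $z^iu^jv^{3n+1-i-2j}$ in $(D_2D_1)^n(z)$. The weight computation shows that every monomial occurring in $(D_2D_1)^n(z)$ has $\omega=3n+1$, so the exponent of $v$ is forced to equal $3n+1-i-2j$ and the expansion has the stated shape with nonnegative integer coefficients. The seed $(D_2D_1)^0(z)=z=z^1u^0v^0$ gives precisely $s_0(1,0)=1$ and $s_0(i,j)=0$ otherwise, matching the initial conditions, and the stated ranges $1\le i\le n$ and $1\le j\le\lrf{(3n+1-i)/2}$ would be confirmed along the way by checking that the recurrence never produces monomials outside this box.

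The heart of the proof is the recurrence, which I would obtain by carrying out the composition $D_2D_1$ one factor at a time. Applying $D_1$ to a typical monomial $z^iu^jv^k$ with $k=3n+1-i-2j$ and collecting terms yields $D_1(S_n)=\sum_{a,b}t_n(a,b)\,z^au^bv^{3n+2-a-2b}$ with
\[
t_n(a,b)=(a+1)s_n(a+1,b-1)+b\,s_n(a,b)+2(3n+3-a-2b)s_n(a,b-1).
\]
Applying $D_2$ to $z^au^bv^c$ (with $c=3n+2-a-2b$) and again collecting, one finds
\[
s_{n+1}(i,j)=i\,t_n(i,j-1)+j\,t_n(i-1,j)+2(3n+5-i-2j)\,t_n(i-1,j-1).
\]
Substituting the three needed values of $t_n$ and merging like terms then produces the six-term recurrence in the statement.

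I expect the only delicate point to be the coefficient of $s_n(i-1,j-1)$: it receives a contribution $2j(3n+4-i-2j)$ from $j\,t_n(i-1,j)$ and a contribution $2(j-1)(3n+5-i-2j)$ from $2(3n+5-i-2j)\,t_n(i-1,j-1)$, and I would verify that these sum to $4(j-1)(3n+4-i-2j)+6n+6-2i-2j$; similarly the two contributions to $s_n(i,j-2)$, namely $2i(3n+5-i-2j)$ from $i\,t_n(i,j-1)$ and $2i(3n+5-i-2j)$ from $2(3n+5-i-2j)\,t_n(i-1,j-1)$, must coalesce into $4i(3n+5-i-2j)$. This bookkeeping is routine but error-prone, so I would cross-check the resulting recurrence against the listed values of $S_1$, $S_2$ and $S_3$, which already pin down $s_1$, $s_2$ and $s_3$.
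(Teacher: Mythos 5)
Your proposal is correct and follows essentially the same route as the paper: the same change of variables ($z$, $xy$, $x+y$, called $a,c,b$ there), the same homogeneity observation fixing the exponent of $x+y$, and the same two-stage computation that first applies $D_1$ and then $D_2$ to a typical monomial before collecting the nine resulting terms (your intermediate coefficients $t_n$ are exactly the ones the paper isolates in Proposition~\ref{prop15}). The coefficient bookkeeping you flag as delicate does check out and reproduces the stated six-term recurrence.
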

\begin{proof}
Consider the grammars~\eqref{Grammar-Jacobi}.
Setting
$a=z,b=x+y,c=xy$,
we get
$D_1(a)=c,D_1(b)=2c,D_1(c)=bc$ and
$D_2(a)=ac,D_2(b)=2ac,D_2(c)=abc$.
Then the changes of grammars are given as follows: $A=\{a,b,c\}$ and
\begin{equation}\label{JAcobi-gram02}
G_3=\{a\rightarrow c,b\rightarrow 2c, c\rightarrow bc\},~G_4=\{a\rightarrow ac,b\rightarrow 2ac, c\rightarrow abc\}.
\end{equation}
Note that $D_4D_3(a)=D_4(c)=abc$ and $(D_4D_3)^2(a)=a(3b^2c^2+4c^3)+a^2(b^3c+8bc^2)$.
In general, there exist nonnegative integers $r_n(i,j,k)$ such that
\begin{equation}\label{D4D3a01}
(D_4D_3)^n(a)=\sum_{i,j,k}r_n(i,j,k)a^ic^jb^k.
\end{equation}
We use the following equivalent expansion to derive the ranges of the indices of this summation:
\begin{equation*}
(D_2D_1)^n(z)=\sum_{i,j,k}r_n(i,j,k)z^i(xy)^j(x+y)^k.
\end{equation*}
It follows from Lemma~\ref{Lemma-Jacobi} that the variable $z$
marks plateaus of $\pi\in\operatorname{JSP}_n$. Hence the degree of $z$ ranges from $1$ to $n$.
Moreover, since the variable $x$ marks ascents and the variable $y$ marks descents, we have $i+2j+k=3n+1$. Set $s_n(i,j)=r_n(i,j,k)$. Then we can write
$(D_4D_3)^n(a)$ as follows:
\begin{equation}\label{D4D3a}
(D_4D_3)^n(a)=\sum_{i=1}^{n}\sum_{j=0}^{\lrf{(3n+1-i)/2}}s_{n}(i,j)a^{i}c^{j}b^{3n+1-i-2j}.
\end{equation}
Then upon taking $a=z,b=x+y$ and $c=xy$, we get~\eqref{mathm02-13}.
In particular,
we have $$s_1(1,1)=1,~s_2(1,2)=3,~s_2(1,3)=4,~s_2(2,1)=1,~s_2(2,2)=8.$$
We now derive a recurrence for $s_n(i,j)$.
For convenience, we set $k=3n+1-i-2j$.
Note that
$$D_3(D_4D_3)^n(a)=\sum_{i,j}s_{n}(i,j)(ia^{i-1}c^{j+1}b^k+ja^ic^jb^{k+1}+2ka^ic^{j+1}b^{k-1}).$$
It follows that
\begin{align*}
&D_4\left(D_3(D_4D_3)^n(a)\right)\\
&=D_4\left(\sum_{i,j}s_{n}(i,j)(ia^{i-1}c^{j+1}b^k+ja^ic^jb^{k+1}+2ka^ic^{j+1}b^{k-1})\right)\\
&=\sum_{i,j}s_{n}(i,j)\left(i(i-1)a^{i-1}c^{j+2}b^k+i(j+1)a^ic^{j+1}b^{k+1}+2ika^ic^{j+2}b^{k-1}\right)+\\
&\sum_{i,j}s_{n}(i,j)\left(ija^ic^{j+1}b^{k+1}+j^2a^{i+1}c^jb^{k+2}+2j(k+1)a^{i+1}c^{j+1}b^k\right)+\\
&\sum_{i,j}s_{n}(i,j)\left(2ika^ic^{j+2}b^{k-1}+2(j+1)ka^{i+1}c^{j+1}b^k+4k(k-1)a^{i+1}c^{j+2}b^{k-2}\right).
\end{align*}
On the other hand,
$(D_4D_3)^{n+1}(a)=\sum_{i,j}s_{n+1}(i,j)a^{i}c^{j}b^{k+3}$.
Comparing the coefficients of $a^ic^jb^{k+3}$ in both sides of $(D_4D_3)^{n+1}(a)=D_4\left(D_3(D_4D_3)^n(a)\right)$, we get the desired recurrence relation.
\end{proof}

We define $$\operatorname{JSP}_{n,k}=\{\pi\in\operatorname{JSP}_n\mid \plat(\pi)=k\}.$$ Let $\vartheta(\pi)$ be the permutation obtained from $\pi\in\operatorname{JSP}_{n}$ by deleting all of the first unbarred $i$ from left to right, where $i\in [n]$. For example, $\vartheta(1331\overline{1}~\overline{2}2442\overline{4}~\overline{3})=31\overline{1}~\overline{2}42\overline{4}~\overline{3}$.
Let $\widehat{\operatorname{JSP}}_{n,n}=\{\vartheta(\pi)\mid\pi\in\operatorname{JSP}_{n,n}\}$. Note that $\#\widehat{\operatorname{JSP}}_{n,n}=(2n)!$.
Then $\vartheta$ is a bijection from $\operatorname{JSP}_{n,n}$ to $\ms_{2n}$.
It is easy to verify that
\begin{equation*}
\sum_{j=1}^{\lrf{(2n+1)/2}}s_{n}(n,j)(xy)^{j}(x+y)^{2n+1-2j}=\sum_{\pi\in\ms_{2n}}x^{\des(\pi)+1}y^{\asc(\pi)+1}.
\end{equation*}

Let $\operatorname{JSPD}_k$ denote the set of Jacobi-Stirling permutations of the multiset $MD_k=M_k\setminus \{k,k\}$,
where a Jacobi-Stirling permutation
of $MD_k$ is a permutation of $MD_k$ such that if $i<j<k$, $\pi_i$ and $\pi_k$ are both unbarred and $\pi_i=\pi_k$, then $\pi_j>\pi_i$.
In particular, $\operatorname{JSPD}_1=\{\overline{1}\}$ and
$\operatorname{JSPD}_2=\{\overline{2}~\overline{1}11,\overline{1}~\overline{2}11,\overline{1}1\overline{2}1,\overline{1}11\overline{2},\overline{2}11\overline{1},
1\overline{2}1\overline{1},11\overline{2}~\overline{1},11\overline{1}~\overline{2}\}$.
For $\pi\in \operatorname{JSPD}_n$, we always set $\pi_0=\pi_{3n-1}=0$.
We define
\begin{equation*}
T_n(x,y,z)=\sum_{\pi\in \operatorname{JSPD}_n}x^{\asc(\pi)}y^{\des(\pi)}z^{\plat(\pi)}.
\end{equation*}
As the proof of Theorem~\ref{mathm02}, it is easy to verify that for $n\geq 1$, we have
\begin{equation*}\label{des-asc-plat-grammar}
D_1(D_2D_1)^{n-1}(x)=D_1(D_2D_1)^{n-1}(y)=D_1(D_2D_1)^{n-1}(z)=T_{n}(x,y,z).
\end{equation*}
Furthermore,
$$T_{n}(x,y,z)=\sum_{i=0}^{n-1}z^{i}\sum_{j=1}^{\lrf{(3n-1-i)/2}}t_n(i,j)(xy)^{j}(x+y)^{3n-1-i-2j}.$$
In particular,
$$\sum_{j=1}^{\lrf{2n-1}/2}t_n(n-1,j)(xy)^{j}(x+y)^{2n-2j}=\sum_{\pi\in\ms_{2n-1}}x^{\des(\pi)+1}y^{\asc(\pi)+1}.$$

We define $$\overline{S}_n(x,y)=\sum_{i=1}^{n}\sum_{j=0}^{\lrf{(3n+1-i)/2}}s_{n}(i,j)x^iy^j,$$
$$\overline{T}_n(x,y)=\sum_{i=0}^{n-1}\sum_{j=1}^{\lrf{(3n-1-i)/2}}t_n(i,j)x^iy^j.$$
It follows from~\eqref{JAcobi-gram02} that
\begin{equation}\label{D4D3a02}
D_3(D_4D_3)^{n-1}(a)=\sum_{i=0}^{n-1}\sum_{j=1}^{\lrf{(3n-1-i)/2}}t_{n}(i,j)a^{i}c^jb^{3n-1-i-2j}.
\end{equation}
Using~\eqref{D4D3a} and~\eqref{D4D3a02}, we get the following result.
\begin{proposition}\label{prop15}
For $n\geq 1$, the numbers $s_n(i,j)$ and $t_n(i,j)$ satisfy the recurrence relation
\begin{align*}
s_n(i,j)&=it_n(i,j-1)+jt_n(i-1,j)+2(3n+2-i-2j)t_n(i-1,j-1),\\
t_{n+1}(i,j)&=(i+1)s_n(i+1,j-1)+js_n(i,j)+2(3n+3-i-2j)s_n(i,j-1),
\end{align*}
with the initial conditions $t_1(0,1)=1$ and $t_1(i,j)=0$ for $(i,j)\neq (0,1)$.
Equivalently, we have
\begin{align*}
\overline{S}_n(x,y)&=2(3n-1)xy\overline{T}_n(x,y)+xy(1-2x)\frac{\partial}{\partial x}\overline{T}_n(x,y)+xy(1-4y)\frac{\partial}{\partial y}\overline{T}_n(x,y),\\
\overline{T}_{n+1}(x,y)&=2(3n+1)y\overline{S}_n(x,y)+y(1-2x)\frac{\partial}{\partial x}\overline{S}_n(x,y)+y(1-4y)\frac{\partial}{\partial y}\overline{S}_n(x,y).
\end{align*}
\end{proposition}

The first few $\overline{S}_n(x,y)$ and $\overline{T}_n(x,y)$ are given as follows:
\begin{align*}
\overline{T}_1(x,y)=y,~
\overline{S}_1(x,y)=xy,~
\overline{T}_2(x,y)=xy+2xy^2+y^2.
\end{align*}
\subsection{Partial $\gamma$-coefficients and a modified Foata-Strehl's group action}
\hspace*{\parindent}

For the grammars~\eqref{JAcobi-gram02}, notice that
the insertion of $\overline{n}$ corresponds to the substitution rules in $G_3$, and
the insertion of the pair $nn$ corresponds to the substitution rules in $G_4$.
Figure~1 provides a diagram of the grammars~\eqref{JAcobi-gram02}. Using this diagram, we discover some statistics on
Jacobi-Stirling permutations, and then we can present combinatorial interpretations of the numbers
$s_n(i,j)$ and $t_n(i,j)$.
\hspace*{\parindent}

\begin{center}
\includegraphics[width=6.0cm,height=4.0cm]{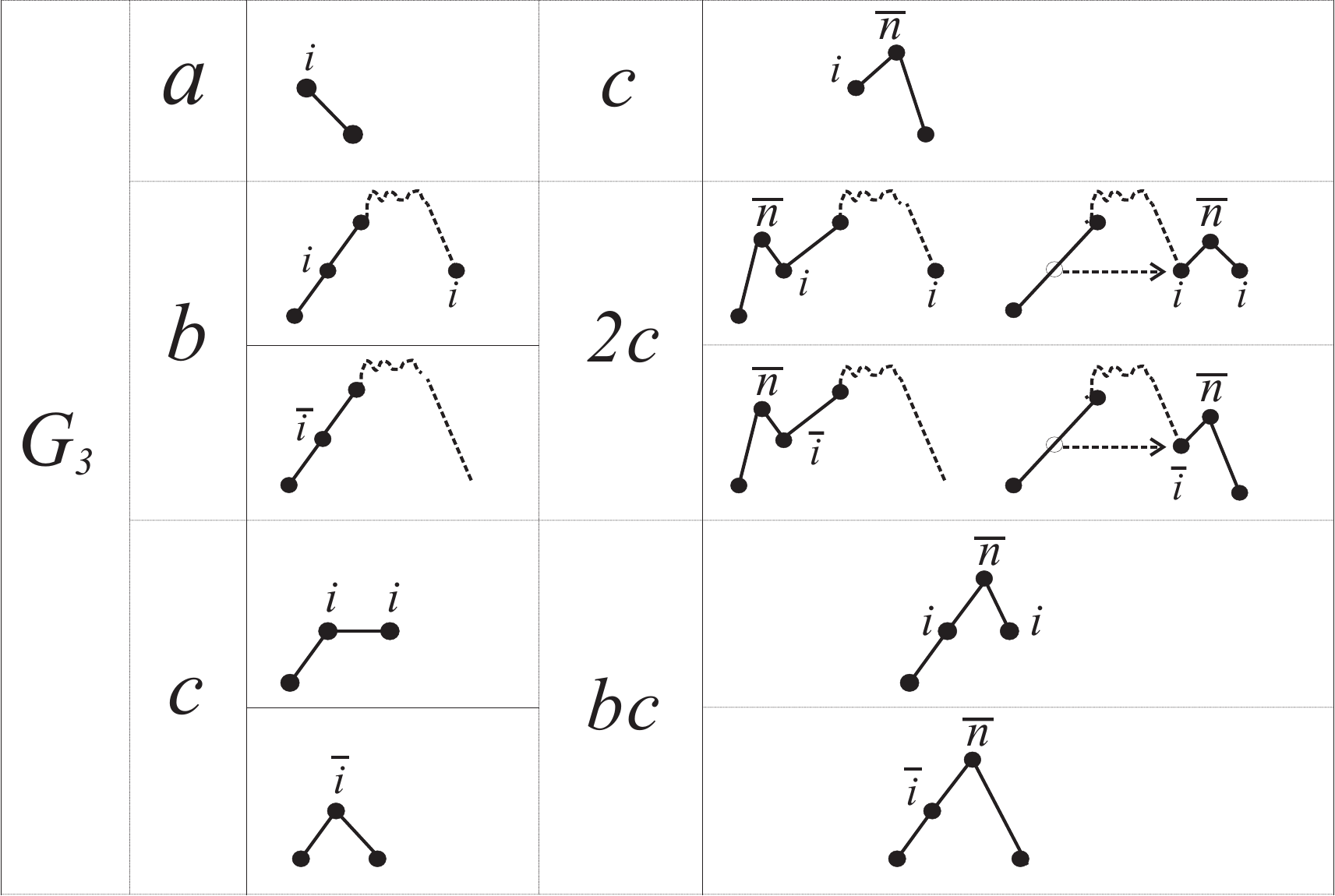}\includegraphics[width=6.0cm,height=4.0cm]{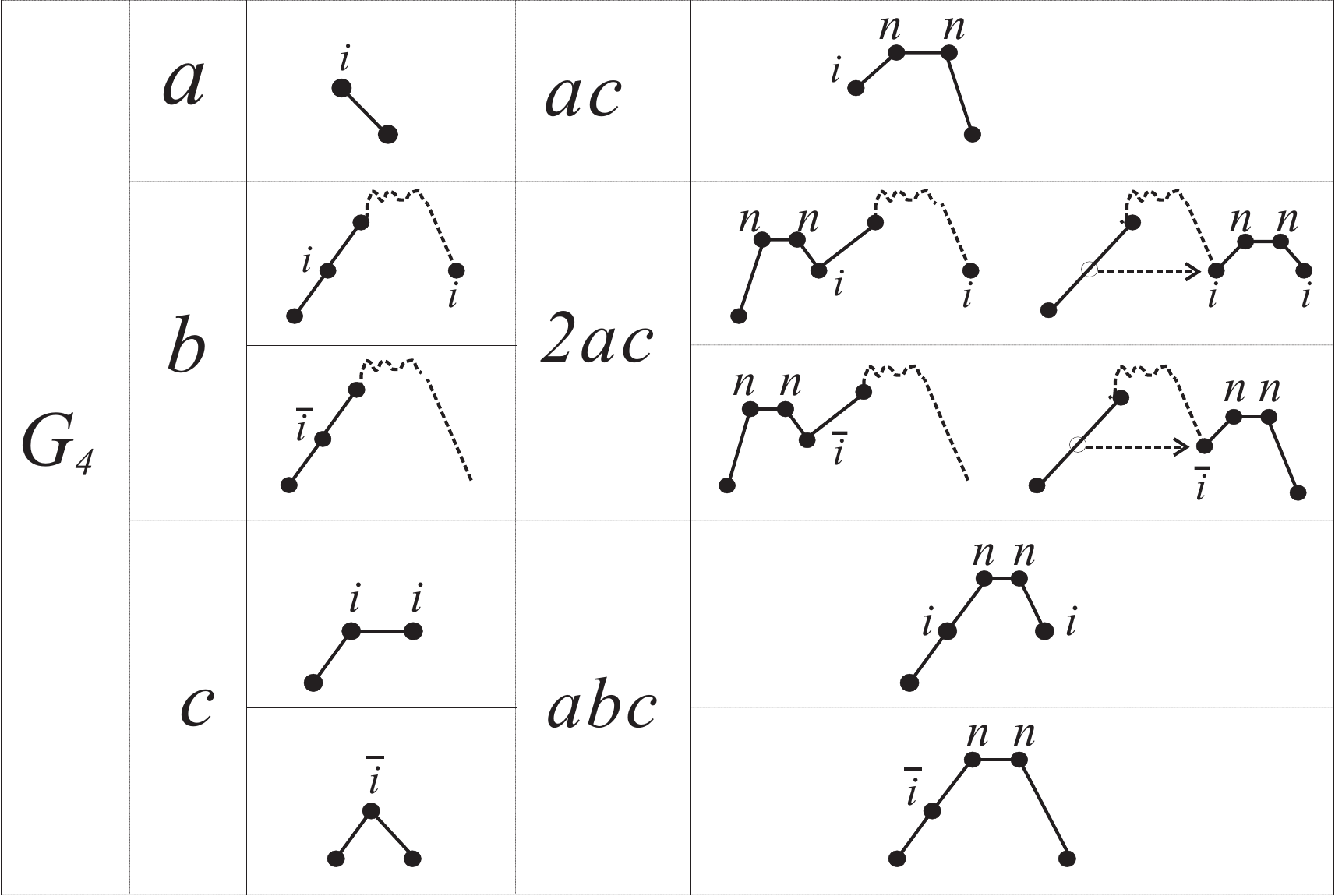}\\
Figure~1.
\end{center}
\hspace*{\parindent}
Let $\pi=\pi_1\pi_2\cdots\pi_{3n}\in \operatorname{JSP}_n$. As usual, we set $\pi_0=\pi_{3n+1}=0$.
An {\it unbarred descent} of $\pi$ is an index $i\in [3n]$ such that $\pi_i>\pi_{i+1}$ and $\pi_i$ is unbarred.
A {\it double ascent} (resp. {\it peak}, {\it left ascent-plateau}) of $\pi$ is an index $i$ such that $\pi_{i-1}<\pi_i<\pi_{i+1}$ (resp.
$\pi_{i-1}<\pi_i>\pi_{i+1}$, $\pi_{i-1}<\pi_i=\pi_{i+1}$), where $i\in[3n]$. It is clear that if $i$ is a peak, then $\pi_i$ is barred.
A {\it barred double descent} of $\pi$ is an index $i\in [3n]$ such that $\pi_{i-1}>\pi_i>\pi_{i+1}$ and $\pi_i$ is barred.
A {\it descent plateau} of $\pi$ is an index $i$ such that $\pi_{i-1}>\pi_i=\pi_{i+1}$.
In the same way, we define the same statistics on $\operatorname{JSPD}_n$.
For $\pi\in\operatorname{JSPD}_n$, we always assume that $\pi_0=\pi_{3n-1}=0$.

We define
\begin{align*}
\ubdes(\pi)&=\#\{i\mid \pi_i>\pi_{i+1},~{\text{$\pi_i$ is unbarred}}\},\\
\dasc(\pi)&=\#\{i\mid \pi_{i-1}<\pi_i<\pi_{i+1}\},\\
\expk(\pi)&=\#\{i\mid \pi_{i-1}<\pi_i>\pi_{i+1}~~{\text{or}}~~\pi_{i-1}<\pi_i=\pi_{i+1}\},\\
\bddes(\pi)&=\#\{i\mid \pi_{i-1}>\pi_i>\pi_{i+1},~{\text{$\pi_i$ is barred}}\},\\
\desp(\pi)&=\#\{i\mid\pi_{i-1}>\pi_i=\pi_{i+1}\}.
\end{align*}

\begin{theorem}\label{snijtnij}
For $n\geq 1$, we have
\begin{align*}
s_{n}(i,j)&=\#\{\pi\in\operatorname{JSP}_n\mid \ubdes(\pi)=i,\expk(\pi)=j,\bddes(\pi)=\desp(\pi)=0\},\\
t_{n}(i,j)&=\#\{\pi\in\operatorname{JSPD}_n\mid \ubdes(\pi)=i,\expk(\pi)=j,\bddes(\pi)=\desp(\pi)=0\}.
\end{align*}
\end{theorem}

The proof of Theorem~\ref{snijtnij} given in the rest of this section follows the same line of argument as that in subsection~\ref{subsection3.3}.

Give a permutation $\pi\in\operatorname{JSPD}_{n}$. For any $k\in\{0,1,\ldots,3n-2\}$,
let $\theta_{n,k}(\pi)$ be the permutation in $\JSP_{n}$ obtained from $\pi$ by inserting the
pair $nn$ between $\pi_k$ and $\pi_{k+1}$, and let $\psi_{\overline{n}}(\pi)$ denote
the permutation in $\operatorname{JSP}_{n-1}$ obtained from $\pi$ by deleting the entry $\overline{n}$.

Give a permutation $\pi\in\operatorname{JSP}_{n}$. For any $k\in\{0,1,\ldots,3n\}$, let $\theta_{\overline{n+1},k}(\pi)$
be the permutation in $\operatorname{JSPD}_{n+1}$ obtained from $\pi$ by inserting the entry $\overline{n+1}$
between $\pi_k$ and $\pi_{k+1}$, and let $\psi_n(\pi)$
denote the permutation in $\operatorname{JSPD}_{n}$ obtained from $\pi$ by deleting the pair $nn$.

We define
\begin{align*}
\operatorname{JSP}_{n;i,j}&=\{\pi\in\operatorname{JSP}_n\mid \ubdes(\pi)=i,\expk(\pi)=j,\bddes(\pi)=\desp(\pi)=0\},\\
\operatorname{JSPD}_{n;i,j}&=\{\pi\in\operatorname{JSPD}_n\mid \ubdes(\pi)=i,\expk(\pi)=j,\bddes(\pi)=\desp(\pi)=0\}.
\end{align*}
For convenience, in the rest of this section we use $|C|$ to denote the cardinality of a set $C$.
For $\pi\in\operatorname{JSPD}_{n;i,j}$, since $\bddes(\pi)=\desp(\pi)=0$, it is easy to verify that
$$|\Dasc(\pi)|=3n-1-|\Ubdes(\pi)|-2|\Expk(\pi)|.$$
Moreover, for $\sigma\in\operatorname{JSP}_{n;i,j}$, we have $|\Dasc(\sigma)|=3n+1-|\Ubdes(\sigma)|-2|\Expk(\sigma)|$.
\begin{lemma}\label{t-s-recurrence}
For $n\geq 1$, we have
\begin{equation}\label{Lemma01RecuJacobi}
|\operatorname{JSPD}_{n+1; i,j}|=(i+1)|\operatorname{JSP}_{n; i+1,j-1}|+j|\operatorname{JSP}_{n; i,j}|+2(3n+3-i-2j)|\operatorname{JSP}_{n; i,j-1}|.
\end{equation}
\end{lemma}
\begin{proof}
We define
\begin{align*}
\Ubdes(\pi)&=\{i\mid \pi_i>\pi_{i+1},~{\text{$\pi_i$ is unbarred}}\},\\
\Expk(\pi)&=\{i\mid \pi_{i-1}<\pi_i>\pi_{i+1}\text{ or }\pi_{i-1}<\pi_i=\pi_{i+1}\},\\
\Dasc(\pi)&=\{i\mid \pi_{i-1}<\pi_i<\pi_{i+1}\}.
\end{align*}
For any $\pi\in\operatorname{JSPD}_{n+1;i,j}$,
let $r=r(\pi)$ be the index such that $\pi_r=\overline{n+1}$.
We now partition the set $\operatorname{JSPD}_{n+1;i,j}$ into the following six subsets:
\begin{align*}
\operatorname{JSPD}_{n+1; i,j}^{1}&=\{\pi\in\operatorname{JSPD}_{n+1;i,j}\mid \pi_{r-1}>\pi_{r+1},~{\text{$\pi_{r-1}$ is unbarred}}\},\\
\operatorname{JSPD}_{n+1; i,j}^{2}&=\{\pi\in\operatorname{JSPD}_{n+1;i,j}\mid \pi_{r-2}<\pi_{r-1}>\pi_{r+1},~{\text{$\pi_{r-1}$ is barred}}\},\\
\operatorname{JSPD}_{n+1; i,j}^{3}&=\{\pi\in\operatorname{JSPD}_{n+1;i,j}\mid  \pi_{r-2}<\pi_{r-1}=\pi_{r+1}\},\\
\operatorname{JSPD}_{n+1; i,j}^{4}&=\{\pi\in\operatorname{JSPD}_{n+1;i,j}\mid \pi_{r-1}<\pi_{r+1}<\pi_{r+2}\},\\
\operatorname{JSPD}_{n+1; i,j}^{5}&=\{\pi\in\operatorname{JSPD}_{n+1;i,j}\mid \pi_{r-2}>\pi_{r-1}=\pi_{r+1}\},\\
\operatorname{JSPD}_{n+1; i,j}^{6}&=\{\pi\in\operatorname{JSPD}_{n+1;i,j}\mid \pi_{r-2}>\pi_{r-1}>\pi_{r+1},~{\text{$\pi_{r-1}$ is barred}}\}.
\end{align*}

{\it Claim 1.} There is a bijection $$\phi_1:\operatorname{JSPD}_{n+1; i,j}^{1}\mapsto \{(\sigma,k)\mid \sigma\in\operatorname{JSP}_{n; i+1,j-1}\text{ and }k\in \Ubdes(\sigma)\}.$$
For any $\pi\in\operatorname{JSPD}_{n+1; i,j}^{1}$, notice that $\psi_{\overline{n+1}}(\pi)\in \operatorname{JSP}_{n; i+1,j-1}$ and $r(\pi)-1\in \Ubdes\left(\psi_{\overline{n+1}}(\pi)\right)$.
Thus, we define the map $\phi_1$ by letting $$\phi_1(\pi)=(\psi_{\overline{n+1}}(\pi),r(\pi)-1).$$
Then the inverse of $\phi_1$ is given by $\phi_1^{-1}(\sigma,k)=\theta_{\overline{n+1},k}(\sigma)$. Therefore, the first term
of the right-hand side of~\eqref{Lemma01RecuJacobi} is explained.

{\it Claim 2.} There is a bijection $$\phi_2:
\operatorname{JSPD}_{n+1; i,j}^{2}\cup\operatorname{JSPD}_{n+1; i,j}^{3}\mapsto \{(\sigma,k)\mid \sigma\in\operatorname{JSP}_{n; i,j}\text{ and }k\in \Expk(\sigma)\}.$$
For any $\pi\in\operatorname{JSPD}_{n+1; i,j}^{2}\cup\operatorname{JSPD}_{n+1; i,j}^{3}$, notice that
$\psi_{\overline{n+1}}(\pi)\in \operatorname{JSP}_{n; i,j}$ and $r(\pi)-1\in \Expk(\psi_{\overline{n+1}}(\pi))$.
Thus, we define the map $\phi_2$ by letting $$\phi_2(\pi)=(\psi_{\overline{n+1}}(\pi),r(\pi)-1).$$
Then the inverse of $\phi_2$ is given by $\phi_2^{-1}(\sigma,k)=\theta_{\overline{n+1},k}(\sigma)$.
Therefore, the second term
of the right-hand side of~\eqref{Lemma01RecuJacobi} is explained.

{\it Claim 3.} There is a bijection $$\phi_3:\operatorname{JSPD}_{n+1; i,j}^{4}\mapsto \{(\sigma,k)\mid \sigma\in\operatorname{JSP}_{n; i,j-1}\text{ and }k\in \Dasc(\sigma)\}.$$
For any $\pi\in\operatorname{JSPD}_{n+1; i,j}^{4}$, notice that $\psi_{\overline{n+1}}(\pi)\in \operatorname{JSP}_{n; i,j-1}$ and $r(\pi)\in \Dasc(\psi_{\overline{n+1}}(\pi))$.
Thus, we define the map $\phi_3$ by letting $\phi_3(\pi)=(\psi_{\overline{n+1}}(\pi),r(\pi))$. Then the inverse of $\phi_3$ is given by $$\phi_3^{-1}(\sigma,k)=\theta_{\overline{n+1},k-1}(\sigma).$$ Then the term $(3n+1-i-2(j-1))|\JSP_{n; i,j-1}|=(3n+3-i-2j)|\JSP_{n; i,j-1}|$ is explained.

{\it Claim 4.} There is a bijection
$$\phi_4:\operatorname{JSPD}_{n+1; i,j}^{5}\cup\operatorname{JSPD}_{n+1; i,j}^{6}\mapsto \{(\sigma,k)\mid \sigma\in\operatorname{JSP}_{n; i,j-1}\text{ and }k\in \Dasc(\sigma)\}.$$
Let $k \in \{0,1,\ldots,3n+1\}$ and let $\pi= \pi_1\pi_2\ldots \pi_{3n+1}\in\operatorname{JSPD}_{n+1}.$
We define a {\it modified Foata-Strehl group action} $\varphi_k$ as follows:
\begin{itemize}\item  If $k$ is a double ascent, then $\varphi_k(\pi)$ is obtained from $\pi$ by deleting $\pi_k$ and then inserting $\pi_k$ immediately before
the integer $\pi_{j}$, where
$j=\min\{s\in \{k+1,k+2,\ldots,3n+2\}\mid\pi_s \leq \pi_k\}$;
\item If $k$ satisfies either $(i)$ it is a descent-plateau or $(ii)$ it is a double descent and $\pi_k$ is barred, then $\varphi_k(\pi)$ is obtained from $\pi$ by deleting $\pi_k$ and then inserting $\pi_k$ right after the integer $\pi_{j}$, where
$j=\max\{s\in \{0,1,2,\ldots,k-1\}\mid\pi_s <\pi_k\}$.
\end{itemize}

For any $\pi\in\operatorname{JSPD}_{n+1; i,j}^{5}$, notice that the index $r(\pi)-1$ is the unique descent-plateau of $\psi_{\overline{n+1}}(\pi)$ and
$\varphi_{r(\pi)-1}\circ \psi_{\overline{n+1}}(\pi)\in \operatorname{JSP}_{n; i,j-1}$.
Read $\varphi_{r(\pi)-1}\circ \psi_{\overline{n+1}}(\pi)$ from left to right and let $p$ be the index of the first occurrence of the integer $\pi_{r(\pi)-1}$. Then $p\in \Dasc(\varphi_{r(\pi)-1}\circ \psi_{\overline{n+1}}(\pi))$.

For any $\pi\in\operatorname{JSPD}_{n+1; i,j}^{6}$, notice that $\pi_{r(\pi)-1}$ has a bar and the index $r(\pi)-1$ is a double-descent in $\psi_{\overline{n+1}}(\pi)$, and $\varphi_{r(\pi)-1}\circ \psi_{\overline{n+1}}(\pi)\in \operatorname{JSP}_{n; i,j-1}$.
Read $\varphi_{r(\pi)-1}\circ \psi_{\overline{n+1}}(\pi)$ from left to right and let $p$ be the index of the occurrence of the integer $\pi_{r(\pi)-1}$. Then $$p\in \Dasc(\varphi_{r(\pi)-1}\circ \psi_{\overline{n+1}}(\pi)).$$
Therefore, we define the map $\phi_4$ by letting $\phi_4(\pi)=(\varphi_{r(\pi)-1}\circ \psi_{\overline{n+1}}(\pi),p)$,
and the inverse of $\phi_4$ is given by $\phi_4^{-1}(\sigma,k)=\theta_{\overline{n+1},r-1}\circ\varphi_k(\sigma)$,
where $r-1$ is the unique descent-plateau or barred double descent of $\varphi_k(\sigma)$.
In conclusion, we also get the term $(3n+3-i-2j)|\operatorname{JSP}_{n; i,j-1}|$.
This completes the proof.
\end{proof}

\begin{lemma}\label{s-t-recurrence}
For $n\geq 1$, we have
\begin{eqnarray*}
|\operatorname{JSP}_{n; i,j}|=i|\operatorname{JSPD}_{n; i,j-1}|+j|\operatorname{JSPD}_{n; i-1,j}|+2(3n+2-i-2j)|\operatorname{JSPD}_{n; i-1,j-1}|.
\end{eqnarray*}
\end{lemma}
For any $\pi\in\operatorname{JSP}_{n;i,j}$,
let $r=r(\pi)$ be the index of the first occurrence of the entry $n$, i.e., $\pi_r=\pi_{r+1}=n$.
We now partition the set $\operatorname{JSP}_{n;i,j}$ into six subsets:
\begin{align*}
\operatorname{JSP}_{n; i,j}^{1}&=\{\pi\in\operatorname{JSP}_{n;i,j}\mid \pi_{r-1}>\pi_{r+2},~{\text{$\pi_{r-1}$ is unbarred}}\},\\
\operatorname{JSP}_{n; i,j}^{2}&=\{\pi\in\operatorname{JSP}_{n;i,j}\mid \pi_{r-2}<\pi_{r-1}>\pi_{r+2},~{\text{$\pi_{r-1}$ is barred}}\},\\
\operatorname{JSP}_{n; i,j}^{3}&=\{\pi\in\operatorname{JSP}_{n;i,j}\mid  \pi_{r-2}<\pi_{r-1}=\pi_{r+2}\},\\
\operatorname{JSP}_{n; i,j}^{4}&=\{\pi\in\operatorname{JSP}_{n;i,j}\mid \pi_{r-1}<\pi_{r+2}<\pi_{r+3}\},\\
\operatorname{JSP}_{n; i,j}^{5}&=\{\pi\in\operatorname{JSP}_{n;i,j}\mid \pi_{r-2}>\pi_{r-1}=\pi_{r+2}\},\\
\operatorname{JSP}_{n; i,j}^{6}&=\{\pi\in\operatorname{JSP}_{n;i,j}\mid \pi_{r-2}>\pi_{r-1}>\pi_{r+2},~{\text{$\pi_{r-1}$ is barred}}\}.
\end{align*}
Then Lemma~\ref{s-t-recurrence} can be proved in the same way as in the proof of Lemma~\ref{t-s-recurrence}.
Here, we only list the following bijections and omit details for simplicity:
\begin{itemize}
  \item [$\Phi_1$:]$\operatorname{JSP}_{n; i,j}^{1}\mapsto \{(\sigma,k)\mid \sigma\in\operatorname{JSPD}_{n; i,j-1}\text{ and }k\in \Ubdes(\sigma)\}$,
  \item [$\Phi_2$:]$\operatorname{JSP}_{n; i,j}^{2}\cup\operatorname{JSP}_{n; i,j}^{3}\mapsto\{(\sigma,k)\mid \sigma\in\operatorname{JSPD}_{n; i-1,j}\text{ and }k\in \Expk(\sigma)\}$,
  \item [$\Phi_3$:]$\operatorname{JSP}_{n; i,j}^{4}\mapsto \{(\sigma,k)\mid\sigma\in\operatorname{JSPD}_{n; i-1,j-1}\text{ and }k\in \Dasc(\sigma)\}$,
  \item [$\Phi_4$:]$\operatorname{JSP}_{n; i,j}^{5}\cup\operatorname{JSP}_{n; i,j}^{6}\mapsto \{(\sigma,k)\mid \sigma\in\operatorname{JSPD}_{n; i-1,j-1}\text{ and }k\in \Dasc(\sigma)\}$.
\end{itemize}

\noindent{\bf A proof
Theorem~\ref{snijtnij}:}
\begin{proof}
Notice that $\operatorname{JSPD}_{1;0,1}=\{\bar{1}\}$ and $\operatorname{JSP}_{1;1,1}=\{\bar{1}11\}$.
Moreover, $\operatorname{JSPD}_{1;i,j}=\emptyset$ for any $(i,j)\neq (0,1)$ and $\operatorname{JSP}_{1;i,j}=\emptyset$ for any $(i,j)\neq (1,1)$. So,
$$t_1(0,1)=1=|\operatorname{JSPD}_{1;0,1}|\text{ and }s_1(1,1)=1=|\operatorname{JSP}_{1;1,1}|.$$
Combining Proposition~\ref{prop15}, Lemma~\ref{t-s-recurrence} and Lemma~\ref{s-t-recurrence}, then by induction we obtain
\begin{align*}
|\operatorname{JSP}_{n; i,j}|&=i|\operatorname{JSPD}_{n; i,j-1}|+j|\operatorname{JSPD}_{n; i-1,j}|+2(3n+2-i-2j)|\operatorname{JSPD}_{n; i-1,j-1}|\\
&=it_n(i,j-1)+jt_n(i-1,j)+2(3n+2-i-2j)t_n(i-1,j-1)\\
&=s_n(i,j),
\end{align*}
\begin{align*}
|\operatorname{JSPD}_{n+1; i,j}|&=(i+1)|\operatorname{JSP}_{n; i+1,j-1}|+j|\operatorname{JSP}_{n; i,j}|+2(3n+3-i-2j)|\operatorname{JSP}_{n; i,j-1}|\\
&=(i+1)s_n(i+1,j-1)+js_n(i,j)+2(3n+3-i-2j)s_n(i,j-1)\\
&=t_{n+1}(i,j).
\end{align*}
This completes the proof.
\end{proof}

Let $[\overline{k}]=\{\overline{1},\overline{2},\ldots,\overline{k}\}$. For any subset $S\subseteq [\overline{k}]$, let
$M_{k,S}=M_k\setminus S$.
Denote by $\operatorname{JSP}_{k,S}$ the set of Jacobi-Stirling permutations of $M_{k,S}$.
Let
$$\operatorname{JSP}_{k,i}=\bigcup_{\substack{S\subseteq [\overline{k}]\\ |S|=i}}\operatorname{JSP}_{k,S}.$$
We define
$$\operatorname{JSP}_{k,i}(x,y,z)=\sum_{\pi\in \operatorname{JSP}_{k,i}}x^{\asc(\pi)}y^{\des(\pi)}z^{\plat(\pi)}.$$
It is clear that $$\operatorname{JSP}_{k,k}(x,y,z)=\sum_{\pi\in \mq_k}x^{\asc(\pi)}y^{\des(\pi)}z^{\plat(\pi)},$$
$$\operatorname{JSP}_{k,0}(x,y,z)=\sum_{\pi\in \operatorname{JSP}_k}x^{\asc(\pi)}y^{\des(\pi)}z^{\plat(\pi)}.$$
Based on empirical evidence, we propose the following conjecture.
\begin{conjecture}
For any $k\geq 1$ and $1\leq i \leq k-1$, the polynomial $\operatorname{JSP}_{k,i}(x,y,z)$
is a partial $\gamma$-positive polynomial.
\end{conjecture}
\section{Derangement polynomials of type $B$}\label{section06}
\subsection{Basic definitions and notation}
\hspace*{\parindent}

Recall that elements $\pi$ of $B_n$ are signed permutations of the set $[\pm n]$ such that
$\pi(-i)=-\pi(i)$ for $i=1,2,\ldots,n$.
As usual, we write signed permutations of $B_n$ as $\pi=\pi(1)\pi(2)\cdots \pi(n)$.
In this section, we denote by $\overline{i}$ the negative element $-i$.
We say that $i\in [n]$ is a {\it weak excedance} of $\pi$ if $\pi(i)=i$ or $\pi(|\pi(i)|)>\pi(i)$ (see~\cite[p.~431]{Bre94}).
Let $\we(\pi)$ be the number of weak excedances of $\pi$.
According to~\cite[Theorem~3.15]{Bre94}, we have $$B_n(x)=\sum_{\pi\in B_n}x^{\we(\pi)}.$$

In the following discussion, we always write $\pi\in B_n$
by using its \emph{standard cycle decomposition}, in which each
cycle is written with its largest entry last and the cycles are written in ascending order of their
last entry. It should be noted that the $n$ letters
appearing in the cycle notation for a signed permutation $\pi\in B_n$ are the
letters $\pi(1),\pi(2),\ldots,\pi(n)$.
A {\it rise} in a cycle of $\pi$ is an index $i$ such that $\pi(i)<\pi(|\pi(i)|)$.
We say that $i$ is a {\it singleton} if $(\overline{i})$ is a cycle of $\pi$, i.e., $\pi(i)=\overline{i}$.
Let $\single(\pi)$ be the number of singletons of $\pi$.
We say that $i$ is a {\it fixed point} of $\pi$ if $(i)$ is a cycle of $\pi$, i.e., $\pi(i)=i$.
\begin{example}
The signed permutation $\pi=\overline{3}51\overline{7}2468\overline{9}$ can be written as
$(\overline{9})(\overline{3},1)(2,5)(4,\overline{7},6)(8)$. Moreover, $\pi$ with only one
singleton $9$ and one fixed point $8$, and $\pi$ has $3$ cycle rises.
\end{example}

As noted by Chow~\cite[p.~819]{Chow09}, the number $\we(\pi)$ equals the sum of the number of cycle rises and the number of fixed points of $\pi$.
We say that $\pi$ is a {\it type $B$ derangement} if $\pi(i)\neq i$ for every $i\in [n]$.
For example, $(\overline{6})(\overline{7},\overline{4})(\overline{3},1)(2,5)\in \mdn_7^B$.
Let $\mdn_n^B$ be the set of all derangements in $B_n$.
Following~\cite{Chow09}, the {\it derangement polynomials of type $B$} are defined by
$$d_0^B(x)=1,~d_n^B(x)=\sum_{\pi\in \mdn_n^B}x^{\we(\pi)}.$$
The first few $d_n^B(x)$ are listed as follows:
$d_1^B(x)=1,~d_2^B(x)=1+4x,~d_3^B(x)=1+20x+8x^2$.

For $\pi\in \mdn_n^B$, we have $$\we(\pi)=\#\{i\in [n]\mid \pi(|\pi(i)|)>\pi(i)\}.$$
We say that $i$ is an {\it anti-excedance} of $\pi$ if $\pi(|\pi(i)|)<\pi(i)$.
Let $\aexc(\pi)$ be the number of anti-excedances of $\pi$.
It is clear that $\we(\pi)+\aexc(\pi)+\single(\pi)=n$.

Let $(c_1,c_2,\ldots,c_i)$ be a cycle of $\pi$. Set $c_{i+1}=c_1$ and $c_0=c_i$. Then we say that $c_j$ is called
\begin{itemize}
  \item a {\it cycle ascent} in the cycle if $c_j<c_{j+1}$, where $1\leq j<i$;
  \item a {\it cycle descent} in the cycle if $c_j>c_{j+1}$, where $1\leq j\leq i$;
  \item a {\it cycle double ascent} in the cycle if $c_{j-1}<c_j<c_{j+1}$, where $1<j<i$;
  \item a {\it cycle double descent} in the cycle if $c_{j-1}>c_j>c_{j+1}$, where $1<j<i$;
 \item a {\it cycle peak} in the cycle if $c_{j-1}<c_j>c_{j+1}$, where $1<j \leq i$;
 \item a {\it cycle valley} in the cycle if $c_{j-1}>c_j<c_{j+1}$, where $1\leq j<i$.
\end{itemize}
Clearly, the number of cycle ascents of $\pi$ is just the number of cycle rises.
Denote by $\cda(\pi)$ (resp.~$\cdd(\pi),\cpk(\pi),\cval(\pi)$) the number of cycle double ascents (resp.~cycle double descents, cycle peaks, cycle valleys) of $\pi$.
For $\pi\in \mdn_n^B$,
it is easy to verify that
\begin{equation}\label{st-der}
\we(\pi)=\cpk(\pi)+\cda(\pi),~\aexc(\pi)=\cval(\pi)+\cdd(\pi),~\cpk(\pi)=\cval(\pi).
\end{equation}
\subsection{Main results}
\hspace*{\parindent}

Let $$E_n(x,y,z)=\sum_{\pi\in \mdn_n^B}x^{\we(\pi)}y^{\aexc(\pi)}z^{\single(\pi)}.$$
Very recently, we obtained the following lemma.
\begin{lemma}[{\cite{Ma1801}}]\label{lemma01}
If $A=\{x,y,z,e\}$ and
\begin{equation}\label{grammar-derangement}
G=\{x\rightarrow xy^2,y\rightarrow x^2y, z\rightarrow x^2y^2z^{-3}, e\rightarrow ez^4\},
\end{equation}
then
\begin{equation}\label{Dne-derangement}
D^n(e)=eE_n(x^2,y^2,z^4).
\end{equation}
\end{lemma}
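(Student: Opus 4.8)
The plan is to prove the equivalent statement that
$$D^n(e)=e\sum_{\pi\in\mdn_n^B}x^{2\we(\pi)}y^{2\aexc(\pi)}z^{4\single(\pi)}$$
by induction on $n$, reading each application of $D$ as the insertion of the new largest letter $n+1$ into a type-$B$ derangement. The base cases are immediate: $D^0(e)=e$ corresponds to the empty derangement, and $D(e)=ez^4$ corresponds to the unique element $(\overline{1})\in\mdn_1^B$, which is a single singleton. For the inductive step I would first record the action of $D$ on a generic monomial. Grouping the contribution of each letter through the product rule gives
$$D\left(e\,x^{2i}y^{2j}z^{4k}\right)=e\left(x^{2i}y^{2j}z^{4k+4}+2i\,x^{2i}y^{2j+2}z^{4k}+2j\,x^{2i+2}y^{2j}z^{4k}+4k\,x^{2i+2}y^{2j+2}z^{4k-4}\right),$$
so a monomial recording $(\we,\aexc,\single)=(i,j,k)$ produces exactly four families of monomials, with coefficients $1,2i,2j,4k$ and with the indicated shifts of the three statistics.

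The heart of the argument is to match these four families with four ways of inserting $n+1$. Writing $\pi\in\mdn_n^B$ in its standard cycle decomposition and using Chow's observation (recalled above) that, away from singletons, $\we(\pi)$ and $\aexc(\pi)$ count the cycle ascents and cycle descents respectively, I would define: (a) creating the new singleton $(\overline{n+1})$, which raises $\single$ by one and is the unique insertion of its kind, matching the term $x^{2i}y^{2j}z^{4k+4}$; (b) splicing $\pm(n+1)$ into one of the $\we(\pi)$ cycle ascents, which, since $n+1$ is the largest letter, turns that ascent edge into an ascent followed by a descent, leaving $\we$ unchanged and raising $\aexc$ by one, the two choices of sign giving the coefficient $2i$; (c) splicing $\pm(n+1)$ into one of the $\aexc(\pi)$ cycle descents, which symmetrically raises $\we$ by one and gives the coefficient $2j$; and (d) absorbing $\pm(n+1)$ into one of the $\single(\pi)$ singletons to form a two-letter signed cycle, which removes a singleton while creating one cycle ascent and one cycle descent, the sign choices on the two letters involved producing the coefficient $4k$.

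To complete the induction I would show that, as $\pi$ ranges over $\mdn_n^B$ and the insertion ranges over the options in (a)--(d), each element of $\mdn_{n+1}^B$ is obtained exactly once. The natural inverse is deletion of the letter of largest absolute value: given $\sigma\in\mdn_{n+1}^B$ one locates $n+1$, records which of the four situations it sits in, and excises it to recover a unique pair consisting of an element of $\mdn_n^B$ together with an insertion record. I expect the main obstacle to be precisely this bookkeeping in the \emph{signed} setting: one must check that deletion always returns a genuine element of $\mdn_n^B$, that the multiplicities contributed by the sign choices are exactly $1,2,2,4$ with no over- or under-counting, and that cases (b)--(d) never collide. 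Once the insertion/deletion is verified to be a bijection respecting the statistic shifts, comparison with the displayed expansion of $D\left(e\,x^{2i}y^{2j}z^{4k}\right)$ closes the induction, and reading off the variables recovers the claimed identity $D^n(e)=eE_n(x^2,y^2,z^4)$ of~\eqref{Dne-derangement}.
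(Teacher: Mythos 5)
Your argument is correct. The paper offers no proof of this lemma at all---it is quoted from \cite{Ma1801}---but your insertion/deletion bijection is precisely the grammatical-labeling argument the paper uses for its other grammar lemmas (Lemmas~\ref{Lemma-Stirling} and~\ref{Lemma-Jacobi}), transported to the standard cycle decomposition of $\mdn_n^B$: your expansion of $D\bigl(e\,x^{2i}y^{2j}z^{4k}\bigr)$ is right, and the four families of terms with coefficients $1$, $2i$, $2j$, $4k$ do correspond to creating the singleton $(\overline{n+1})$, splicing $\pm(n+1)$ into one of the $\we(\pi)$ cycle ascents or one of the $\aexc(\pi)$ cycle descents (including the wrap-around descent ending each cycle), and replacing a singleton $(\overline{m})$ by one of the four signed $2$-cycles on $\{m,n+1\}$. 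The one point you flag as bookkeeping is indeed the only delicate spot: excising $n+1$ from a $2$-cycle must return $(\overline{m})$ rather than the forbidden fixed point $(m)$, so the deletion map is $4$-to-$1$ there, $2$-to-$1$ on longer cycles (the sign of $\pm(n+1)$ being the extra datum), and $1$-to-$1$ on singletons, which matches the multiplicities exactly and closes the induction.
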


Now we present the fourth main result of this paper.
\begin{theorem}
The polynomial $E_n(x,y,z)$ is a partial $\gamma$-positive polynomial. More precisely,
for $n\geq 0$, we have
\begin{equation}\label{dnbx}
E_n(x,y,z)=\sum_{i=0}^nz^{i}\sum_{j =0}^{\lrf{(n-i)/2}}g_n(i,j)(xy)^{j}(x+y)^{n-i-2j},
\end{equation}
where the numbers $g_n(i,j)$ satisfy the recurrence relation
\begin{equation}\label{gnij-recu}
g_{n+1}(i,j)=g_n(i-1,j)+4(1+i)g_n(i+1,j-1)+2j g_n(i,j)+4(n+2-i-2j)g_n(i,j-1),
\end{equation}
with the initial conditions $g_1(1,0)=1$ and $g_1(1,j)=0$ for $j\neq 0$.
\end{theorem}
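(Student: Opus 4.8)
We have Lemma~\ref{lemma01}: with the grammar $G=\{x\to xy^2, y\to x^2y, z\to x^2y^2z^{-3}, e\to ez^4\}$, we have $D^n(e)=eE_n(x^2,y^2,z^4)$. We want to prove that $E_n(x,y,z)$ has the partial $\gamma$-positive expansion \eqref{dnbx}, with the coefficients $g_n(i,j)$ satisfying the recurrence \eqref{gnij-recu}. This is an instance of the "change of grammars" method illustrated in the proof of Proposition~\ref{Foata}, so I would follow that template closely.

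**Setting up the change of variables.** The plan is first to compute how the grammar $G$ acts on the symmetric combinations $u=xy$ and $v=x+y$. Let me verify the relevant derivatives. Since $D(x)=xy^2$ and $D(y)=x^2y$, I compute $D(u)=D(xy)=D(x)y+xD(y)=xy^3+x^3y=xy(x^2+y^2)$ and $D(v)=D(x)+D(y)=xy^2+x^2y=xy(x+y)=uv$. To close the system I need $D(x^2+y^2)$; I get $D(x^2+y^2)=2xD(x)+2yD(y)=2x^2y^2+2x^2y^2=4x^2y^2$. So if I also set $w=x^2+y^2$, then $D(u)=uw$, $D(w)=4u^2$, and $D(v)=uv$. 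Here is the key bookkeeping point: the statement \eqref{dnbx} is written in terms of $xy$ and $x+y$, i.e.\ in terms of $u$ and $v$, but the derangement grammar naturally produces even powers — Lemma~\ref{lemma01} evaluates at $(x^2,y^2,z^4)$. Since $v^2=(x+y)^2=x^2+2xy+y^2=w+2u$, the two symmetric coordinate systems $\{u,v\}$ and $\{u,w\}$ are interchangeable, and after the substitution $x\mapsto x^2$, $y\mapsto y^2$ the monomial $(x+y)$ in \eqref{dnbx} corresponds to $x^2+y^2$, matching $w$. I would therefore track everything in the letters $u$, $w$, $z$, $e$ and only at the end translate back.

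**The main computation and identification of the recurrence.** Next I would introduce the single letter $z$ and $e$. From the grammar, $D(z)=x^2y^2z^{-3}=u^2z^{-3}$ and $D(e)=ez^4$. The plan is to argue, by the same reasoning used in part~(iii) of the proof of Proposition~\ref{Foata}, that $D^n(e)$ admits an expansion
\begin{equation*}
D^n(e)=e\sum_{i}\sum_{j}g_n(i,j)\,u^{2j}\,w^{\,n-i-2j}\,z^{4i},
\end{equation*}
where the exponent $4i$ on $z$ records the singletons and the $e$ factors out because $D(e)=ez^4$ reintroduces a factor $e$ at each step. The heart of the proof is to apply $D$ once more to this expression and read off the recurrence \eqref{gnij-recu}. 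Using the product rule together with $D(e)=ez^4$, $D(u)=uw$, $D(w)=4u^2$, and $D(z^4)=4z^3D(z)=4z^3\cdot u^2z^{-3}=4u^2$, each of the four terms on the right-hand side of \eqref{gnij-recu} should fall out of one of these four derivative rules: differentiating $e$ contributes the term shifting $i\mapsto i-1$ (the $g_n(i-1,j)$ term); differentiating the $z^{4i}$ factor produces a $u^2$ and lowers $i$, giving the $4(1+i)g_n(i+1,j-1)$ term after reindexing; differentiating the $u^{2j}$ factor (which carries $2j$ as the power of $u^2$, and $D(u^{2j})=2j\,u^{2j-1}D(u)=2j\,u^{2j}w$) yields the diagonal $2j\,g_n(i,j)$ term; and differentiating the $w^{n-i-2j}$ factor via $D(w)=4u^2$ produces the $4(n+2-i-2j)g_n(i,j-1)$ term. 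I would carry out this substitution carefully, collect coefficients of $u^{2j}w^{\,(n+1)-i-2j}z^{4i}$, and check that the resulting recurrence is exactly \eqref{gnij-recu}; the initial conditions $g_1(1,0)=1$ come directly from $D(e)=ez^4$, i.e.\ $D^1(e)=eu^0w^0z^4$ once we note that the single letter forces $i=1$, $j=0$. Finally, setting $y=1$ (so $u=x$, $w=1+x^2$, and after the Lemma's squaring the variables align with $xy$ and $x+y$) and invoking Lemma~\ref{lemma01} converts the grammar identity into \eqref{dnbx}.

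**Where the difficulty lies.** The routine part is the derivation itself; the delicate part is the index bookkeeping, and I expect the main obstacle to be matching the exponents through the $(x^2,y^2,z^4)$ substitution of Lemma~\ref{lemma01} so that the coefficient $4(1+i)$ and the coefficient $4(n+2-i-2j)$ come out with exactly the stated constants. In particular I must be careful that the reindexing when differentiating $z^{4i}$ (which lowers $i$ by one and raises the power of $u^2$ by one, hence lowers the available $w$ power and couples to $j-1$) produces the factor $4(1+i)$ rather than $4i$, and that the combinatorial range $0\le i\le n$, $0\le j\le\lfloor(n-i)/2\rfloor$ is preserved under the recurrence. I would double-check the boundary behaviour and the nonnegativity of all four coefficients (which is immediate since $n+2-i-2j\ge 0$ on the relevant range), thereby establishing both the recurrence and the partial $\gamma$-positivity.
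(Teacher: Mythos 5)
Your proposal is correct and follows essentially the same route as the paper: both perform the change of grammar $u=xy$, $x^2+y^2$ (your $w$, the paper's $v$), $z^4$ (the paper's $t$) and $e$ (the paper's $s$), expand $D^n(e)$ in these letters, and read off the recurrence \eqref{gnij-recu} term by term from $D(e)=ez^4$, $D(z^4)=4u^2$, $D(u)=u(x^2+y^2)$ and $D(x^2+y^2)=4u^2$, with all four coefficient matches as you describe. The only blemish is the closing remark about ``setting $y=1$'', which is unnecessary and slightly misleading here: since \eqref{dnbx} is a three-variable identity, one should simply substitute $x^2\mapsto x$, $y^2\mapsto y$, $z^4\mapsto z$ via Lemma~\ref{lemma01}, exactly as you already indicate in the same sentence.
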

\begin{proof}
Consider the grammar~\eqref{grammar-derangement}. If we set $s=e,t=z^4,u=xy$ and $v=x^2+y^2$,
then
$$D(s)=st,D(t)=4u^2,~D(u)=uv,~D(v)=4u^2.$$
Thus, if $A=\{s,t,v,u\}$ and
\begin{equation}\label{grammar-der02}
G=\{s\rightarrow st,t\rightarrow 4u^2,u\rightarrow uv,v\rightarrow 4u^2\},
\end{equation}
then we have
$D(s)=st,~
D^2(s)=s(t^2+4u^2),~
D^3(s)=s(t^3+12tu^2+8u^2v)$.
In general, there exist nonnegative integers $f_n(i,j,k)$ such that
\begin{equation}\label{Dns0111}
D^n(s)=s\sum_{i,j,k}f_n(i,j,k)t^{i}u^{j}v^{k}.
\end{equation}
By induction, it is easy to verify that the indices $i,j$ and $k$ of the summation~\eqref{Dns0111} are all nonnegative integers satisfying $i+j+k=n$. Moreover,
from~\eqref{grammar-der02}, we see that
the variable $u$ has only even powers. Set $g_n(\alpha,\beta)=f_n(\alpha,2\beta,k)$.
Then~\eqref{Dns0111} can be written as follows:
\begin{equation}\label{Dns01}
D^n(s)=s\sum_{\alpha=0}^n\sum_{\beta =0}^{\lrf{(n-\alpha)/2}}g_n(\alpha,\beta)t^{\alpha}u^{2\beta}v^{n-\alpha-2\beta}.
\end{equation}
Comparing~\eqref{Dne-derangement} with~\eqref{Dns01}, we get~\eqref{dnbx}.
Notice that
\begin{align*}
D^{n+1}(s)&=D(D^n(s))\\
&=D\left(s\sum_{\alpha=0}^n\sum_{\beta =0}^{\lrf{(n-\alpha)/2}}g_n(\alpha,\beta)t^{\alpha}u^{2\beta}v^{n-\alpha-2\beta}\right)\\
&=s\sum_{\alpha,\beta}g_n(\alpha,\beta)(t^{\alpha+1}u^{2\beta}v^{n-\alpha-2\beta}+4\alpha t^{\alpha-1}u^{2\beta+2}v^{n-\alpha-2\beta}\\
&+2\beta t^{\alpha}u^{2\beta}v^{n+1-\alpha-2\beta}+4(n-\alpha-2\beta)t^{\alpha}u^{2\beta+2}v^{n-1-\alpha-2\beta}).
\end{align*}
By comparing coefficients on the both sides of $D^{n+1}(s)=D(D^n(s))$, we get~\eqref{gnij-recu}.
\end{proof}

Let $\overline{E}_n(x,y)=\sum_{i,j} g_n(i,j)x^iy^j$.
Multiplying both sides of the recurrence relation~\eqref{gnij-recu} by $x^iy^j$ and summing over all $i,j$, we get that
the polynomials $\overline{E}_n(x,y)$ satisfy the recurrence relation
$$\overline{E}_{n+1}(x,y)=(x+4ny)\overline{E}_n(x,y)+4y(1-x)\frac{\partial}{ \partial x}\overline{E}_n(x,y)+2y(1-4y)\frac{\partial}{\partial y}\overline{E}_n(x,y),$$
with the initial condition $g_0(x,y)=1$. The first few $\overline{E}_n(x,y)$ are given as follows:
$$\overline{E}_1(x,y)=x,~
\overline{E}_2(x,y)=x^2+4y,
~\overline{E}_3(x,y)=x^3+12xy+8y.$$

Let $a_n(x)=\sum_{k\geq 1}a(n,k)x^k$, where the numbers $a(n,k)$ are defined by~\eqref{Anx-gamma}.
\begin{corollary}
For $\geq 0$, we have
\begin{equation*}
\overline{E}_{n+1}(x,y)=x\overline{E}_n(x,y)+\sum_{k=0}^{n-1}\binom{n}{k}2^{n+1-k}\overline{E}_k(x,y)a_{n-k}(y).
\end{equation*}
\end{corollary}
\begin{proof}
Let $G$ be the grammar~\eqref{grammar-der02}.
Note that
$D(t)=4u^2,D^2(t)=8u^2v$.
In general, suppose that $D^n(t)=2^{n+1}\sum_{k\geq 1}\widehat{a}(n,k)u^{2k}v^{n+1-2k}$.
Then we have
\begin{align*}
D^{n+1}(t)&=D\left(2^{n+1}\sum_{k\geq 1}\widehat{a}(n,k)u^{2k}v^{n+1-2k}\right)\\
&=2^{n+2}\sum_{k\geq 1}\widehat{a}(n,k)\left(ku^{2k}v^{n+2-2k}+2(n+1-2k)u^{2k+2}v^{n-2k}\right).
\end{align*}
Therefore,
$\widehat{a}(n+1,k)=k\widehat{a}(n,k)+2(n+3-2k)\widehat{a}(n,k-1)$.
It is clear that $\widehat{a}(1,1)=1$ and $\widehat{a}(1,k)=0$ for $k\neq 1$.
Since $a(n,k)$ and $\widehat{a}(n,k)$ satisfy the same recurrence relation and initial conditions, they agree.
Using the {\it Leibniz's formula},
we get
$$D^{n+1}(s)=D^n(st)=tD^n(s)+\sum_{k=0}^{n-1}\binom{n}{k}D^k(s)D^{n-k}(t),$$
which yields the desired recurrence relation.
\end{proof}

Let $$g_n=\sum_{i=0}^n\sum_{j =0}^{\lrf{(n-i)/2}}g_n(i,j).$$
The first few $g_n$ are $g_0=1,g_1=1,g_2=5,g_3=21,g_4=153,g_5=1209$.
It should be noted that the numbers $g_n$ appear as A182825 in~\cite{Sloane}.

\begin{theorem}
For $n\geq 1$, we have
\begin{equation*}\label{gnij-combinatorial}
g_n(i,j)=\#\{\pi\in \mdn_n^B\mid \single(\pi)=i,\cpk(\pi)=j,\cda(\pi)=0\}.
\end{equation*}
\end{theorem}
\begin{proof}
We define an action $\varphi_{x}$ on $\mdn_n^B$ as follows.
Let $c=(c_1,c_2,\ldots,c_i)$ be a cycle of $\pi\in \mdn_n^B$. Since $c_i=\max\{c_1,c_2,\ldots,c_i\}$, we set
$c_0=c_i,c_{i+1}=c_1$ and $\widetilde{c}=(c_0,c_1,c_2\ldots,c_i,c_{i+1})$.
\begin{itemize}\item  If $c_k$ is a cycle double ascent in $c$,
then $\varphi_{c_k}(\widetilde{c})$ is obtained by deleting $c_k$ and then inserting $c_k$ between $c_j$ and $c_{j+1}$, where $j$ is the largest index satisfying $0\leq j<k$ and $c_j>c_k>c_{j+1}$;
\item If $c_k$ is a cycle double descent in $c$, then $\varphi_{c_k}(\widetilde{c})$ is obtained by deleting $c_k$ and then inserting $c_k$ between $c_j$ and $c_{j+1}$, where $j$ is the smallest index satisfying $k<j<i$ and $c_j<c_k<c_{j+1}$;
\item If $c_k$ is neither a cycle double ascent nor a cycle double descent in $c$, then $c_k$ is a cycle peak or a cycle valley. In this case, we let $\varphi_{c_k}(\widetilde{c})=\widetilde{c}$.
\end{itemize}

We now define a modified Foata-Strehl group action $\varphi'_x$ on $\mdn_n^B$ by
$$\varphi'_x(\pi)=\left\{\begin{array}{lll}
\varphi_x(\pi),&\text{ if $x$ is a cycle double ascent or a cycle double descent;}\\
\pi,&\text{if $x$ is a cycle peak or a cycle valley.}\\
\end{array}\right.$$
It is clear that the ${\varphi'_x}$'s are involutions and that they commute.
For any subset $S\subseteq[n]$, we define the function $\varphi'_S : \mdn_n^B \mapsto \mdn_n^B$ by
$\varphi'_S(\pi)=\prod\limits_{x\in S}\varphi'_x(\pi)$.
Hence the group $\mathbb{Z}^{n}_2$ acts on $\mdn_n^B$ via the function $\varphi'_S$, where $S \subseteq[n]$.
Let $\Orb(\pi)=\{g(\pi):g\in \mathbb{Z}^{n}_2\}$ be
the orbit of $\pi$ under the modified Foata-Strehl group action. Then the modified Foata-Strehl group action
divides the set $\mdn_n^B$ into disjoint orbits such that there is a unique permutation in each orbit which has no cycle double ascent.
Let $\Dasc(\pi)$ and $\Ddes(\pi)$ denote the sets of cycle double ascents and cycle double descents of $\pi$, respectively.
Let $S=S(\pi)=\Dasc(\pi)\cup \Ddes(\pi)$.
Note that $\Dasc(\varphi'_S(\pi))=\Ddes(\pi),~\Ddes(\varphi'_S(\pi))=\Dasc(\pi)$.
Let $\widehat{\pi}$
be the unique element in $\Orb(\pi)$ with no cycle double ascent.
Therefore, using~\eqref{st-der}, it is clear that
\begin{align*}
\sum_{\sigma\in \Orb(\pi)}x^{\we(\sigma)}y^{\aexc(\sigma)}z^{\single(\sigma)}&=\sum_{\sigma\in \Orb(\pi)}z^{\single(\sigma)}(xy)^{\cpk(\sigma)}x^{\cda(\sigma)}y^{\cdd(\sigma)}\\
&=z^{\single(\widehat{\pi})}(xy)^{\cpk(\widehat{\pi})}(x+y)^{n-\single(\widehat{\pi})-2\cpk(\widehat{\pi})}.
\end{align*}
Note that $\cpk(\sigma)=\cpk(\pi)$ for any $\sigma\in \Orb(\pi)$. Thus,
\begin{align*}
\sum_{\sigma\in \Orb(\pi)}x^{\we(\sigma)}y^{\aexc(\sigma)}z^{\single(\sigma)}=z^{\single({\pi})}(xy)^{\cpk({\pi})}(x+y)^{n-\single({\pi})-2\cpk({\pi})},
\end{align*}
and the theorem follows.
\end{proof}

\section{Concluding remarks}
\hspace*{\parindent}
In this paper we apply the change of grammars method to study $\gamma$-positivity and partial $\gamma$-positivity of descent-type polynomials.
Based on the results of this paper, we see that if the grammar of a combinatorial structure is partial symmetric,
then we may use~\eqref{change-grammars} as the type of change of grammars.
In the same way, one may consider multivariate extensions of orthogonal polynomials and rook polynomials.
Moreover, it would be interesting to introduce some partial $\gamma$-positive enumerative polynomials for the
complex reflection groups $G(r,p,n)$, where $r,p,n$ are positive integers such that $p$ divides $r$.
In particular, $G(1,1,n)=\msn$ and $G(2,1,n)=B_n$. Furthermore, $\gamma$-positivity has been extensively
studied in topological combinatorics (see~\cite{Athanasiadis17,Gal05} for instance), it would be interesting to
explore algebraic and topological significance of partial $\gamma$ coefficients and their $q$-analogues.

We end our paper by proposing the following conjecture.
\begin{conjecture}
If $f(x)$ is a real-rooted enumerative polynomial with only nonnegative coefficients,
then there exists a partial $\gamma$-positive polynomial
$f(x,y,z)$ such that $f(x,1,1)=f(x)$.
\end{conjecture}
\section*{Acknowledgements}
The authors appreciate the careful review, corrections and helpful suggestions to this paper made by the referees.

\end{document}